\DeclareMathOperator{\enm}{End}
\DeclareMathOperator{\ext}{Ext}
\DeclareMathOperator{\id}{id}
\DeclareMathOperator{\hmm}{Hom}
\DeclareMathOperator{\mor}{Mor}
\DeclareMathOperator{\spec}{Spec}
\DeclareMathOperator{\aspec}{aSpec}
\DeclareMathOperator{\simp}{Simp}
\newcommand{\cat}[1]{\mathbf{#1}}
\newcommand{\grmcat}[1]
\newtheorem{proposition}{Proposition}
\newtheorem{theorem}{Theorem}
\newtheorem{lemma}{Lemma}
\newtheorem{corollary}{Corollary}
\newtheorem{definition}{Definition}
\newtheorem{example}{Example}
\begin{document}
\author{Arvid Siqveland}
\title{Schemes of Associative algebras}

\maketitle

\begin{abstract} 
We give a definition of associative schemes, schemes of associative rings, over a field $k,$ using the definition of  completion of an associative $k$-algebra in a finite set of simple modules. We start by giving a weaker but sufficient definition of ordinary schemes of commutative rings which can be generalized to associative rings.
\end{abstract}

\section{Algebraic Geometry}

\begin{definition} Let $S$ be a set and let $\cat C$ be a small category. An object $U\in\cat C$ is called a fine moduli for $S$ if there exists an object $I\in\cat C$ such that $$S\simeq\mor_{\cat C}(I,U).$$
\end{definition}

Thus the elements in the set $S$ have a structure as a set of morphisms in a category.

\begin{example}[Algebraic Varieties] Let $A=k[x_1,\dots,x_n]/\mathfrak a$ be a domain, $k$ algebraically closed. Let $V=Z(\mathfrak a)$ be the set of maximal ideals in $A.$ Then we know that $A$ is a fine moduli for $V$ (in the category $\cat{Alg}_k^{op}$) because $$\hmm_k(A,k)\simeq V.$$
\end{example}

\begin{example}[Projective Varieties] Let $S$ be a graded $k$-algebra, and let $\mathfrak p$ be a homogeneous prime ideal. Let $V=Z(\mathfrak p)$ be the set of homogeneous maximal ideals. Then $\hmm^0_k(S,k)$ is not in bijective correspondence with $V$, even when considering graded homomorphisms. This is because the grading can be twisted. Considering graded rings and graded homomorphisms is not a categorical treatment of projective varieties.
\end{example}

It is well known that projective varieties can be covered by their affine open subschemes: The French (?) idea: If we cannot find a fine moduli for a set $S$ in a category $\cat C,$ then it can be possible to extend the category to $\tilde{\cat C},$ i.e. such that $\cat C$ is a subcategory, and such that $U\in\tilde{\cat C}$ is a fine moduli for $S.$ This is one advantage with schemes, if not the initial intention.

\section{The Category of Schemes Over $k$}

Let $A$ be a $k$-algebra, let $S=\operatorname{Max} A$ be the set of maximal ideals $\mathfrak m\subset A.$ This is a nice set, but by the above, we are looking for a (contravariant) functor $$F:\cat{Alg}_k\rightarrow\cat{Sets}$$ such that $F(k)=S.$ Given a homomorphism $f:A\rightarrow B,$ if $\mathfrak m\subset B$ is maximal (or prime), then $f^{-1}(\mathfrak m)$ is not necessarily maximal, but always prime. This means that we have a contravariant functor $\spec:\cat{Alg}_k\rightarrow\cat{Sets}$ defined by $\spec A=\{\mathfrak p\subset A|\ \mathfrak p\text{ is prime }\}$ fulfilling the condition.
Let $f\in A$ and define $D(f)=\{\mathfrak p\subset A|f\notin\mathfrak p\}\subseteq\spec A.$ Then because, for $f,g\in A,\ D(f)\cap D(g)=D(fg),$ and also $D(0)=\spec A,\ D(1)=\emptyset,$ the family $\{D(f)|f\in A\}$ is a basis for a topology (the Zariski topology) on $\spec A.$

\begin{lemma}\label{sheaflemma} Let $X$ be a topological space and let $F$ be a presheaf on $X,$ that is a contravariant functor $F:\operatorname{Top}X\rightarrow\cat{Rings}.$ Then the functor $$\mathcal F(U)=\underset{\underset{V\varsubsetneqq U}\leftarrow}\lim\ F(V)$$ is a sheaf on $X.$
\end{lemma}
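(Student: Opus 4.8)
The plan is to work with $\mathcal F$ concretely: an element of $\mathcal F(U)=\varprojlim_{V\varsubsetneqq U}F(V)$ is a compatible family $s=(s_V)_{V\varsubsetneqq U}$ with $s_V\in F(V)$ and $s_V|_W=s_W$ whenever $W\subseteq V\varsubsetneqq U$. First I would verify that $\mathcal F$ is a presheaf: for $W\subseteq U$ the index poset $\{V:V\varsubsetneqq W\}$ is a full subposet of $\{V:V\varsubsetneqq U\}$ — if $V\varsubsetneqq W\subseteq U$ then $V\varsubsetneqq U$ — so the universal property of the limit gives a restriction $\rho^U_W\colon\mathcal F(U)\to\mathcal F(W)$, which is simply $(s_V)_{V\varsubsetneqq U}\mapsto(s_V)_{V\varsubsetneqq W}$; the relations $\rho^U_U=\mathrm{id}$ and $\rho^W_{W'}\circ\rho^U_W=\rho^U_{W'}$ are then immediate. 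Note also that $\mathcal F(\emptyset)$ is the limit over the empty diagram, i.e.\ the terminal (zero) ring, which is the value the empty cover forces, and that there is a natural map $F(U)\to\mathcal F(U)$, $a\mapsto(a|_V)_V$.

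Next, both sheaf axioms for a cover $U=\bigcup_iU_i$ — where we may assume each $U_i\varsubsetneqq U$, since otherwise some $U_i=U$ and there is nothing to prove — come down to the same maneuver. For separation, if $s,t\in\mathcal F(U)$ satisfy $\rho^U_{U_i}(s)=\rho^U_{U_i}(t)$ for all $i$, then for each $V\varsubsetneqq U$ and each $i$ one has $V\cap U_i\subseteq V\varsubsetneqq U$, hence $s_V|_{V\cap U_i}=s_{V\cap U_i}$ and $t_V|_{V\cap U_i}=t_{V\cap U_i}$, while the hypothesis gives $s_{V\cap U_i}=t_{V\cap U_i}$ whenever $V\cap U_i\varsubsetneqq U_i$; since $\{V\cap U_i\}_i$ covers $V$, one wants $s_V=t_V$ in $F(V)$. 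For gluing, given $s^{(i)}\in\mathcal F(U_i)$ agreeing on every overlap $U_i\cap U_j$, one builds for each $V\varsubsetneqq U$ a section $s_V\in F(V)$ from the data $s^{(i)}_{V\cap U_i}$ — which agree on $V\cap U_i\cap U_j$ and cover $V$ — and then checks that $(s_V)_V$ is compatible and restricts to each $s^{(i)}$.

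The hard part is exactly the step left dangling in the previous paragraph: passing from a compatible family over an open cover of $V$ to a genuine section of $F(V)$. A bare presheaf $F$ supports neither the existence nor the uniqueness of such a gluing — and there is the further annoyance that when $V\cap U_i=U_i$ the family $s^{(i)}$ carries no component over $U_i$ at all — so I expect to be forced to run the whole construction over a basis $\mathcal B$ of opens on which $F$ is already a sheaf, i.e.\ to replace $\varprojlim_{V\varsubsetneqq U}F(V)$ by $\varprojlim_{\mathcal B\ni V\subseteq U}F(V)$; then every gluing asked for above happens among honest sheaf sections and all three steps close. In the application this is harmless: for $F$ the presheaf $D(f)\mapsto A_f$ on $X=\spec A$ the basic opens $D(f)$ form such a basis, and when $A$ is a domain one has the additional simplification that each $A_f\hookrightarrow\operatorname{Frac}(A)$, which makes the separation half of the argument transparent.
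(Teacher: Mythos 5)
You are right to stop where you do: the gluing step you leave ``dangling'' cannot be closed, because the lemma as stated is false for an arbitrary presheaf, so the gap is in the statement rather than in your argument. Concretely, for any minimal nonempty open $U$ (one whose only proper open subset is $\emptyset$) the formula gives $\mathcal F(U)=F(\emptyset)$, which discards all of $F(U)$. On the discrete two-point space $X=\{a,b\}$ with $F$ the constant presheaf $\mathbb Z$ (identity restrictions) one gets $\mathcal F(\{a\})=\mathcal F(\{b\})=F(\emptyset)=\mathbb Z$ and $\mathcal F(X)\simeq\mathbb Z$ with both restrictions equal to the same projection, so the sections $1\in\mathcal F(\{a\})$ and $2\in\mathcal F(\{b\})$ agree on $\mathcal F(\emptyset)=0$ but do not glue: the sheaf axiom fails. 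Even for the presheaf the paper actually feeds into this lemma, $O_X(U)=\prod_{\mathfrak p\in U}A_{\mathfrak p}$ on $X=\spec A$, the same defect appears at minimal opens: for $A$ a field, $\spec A$ is a point and the formula returns $\mathcal O_{\spec A}(\spec A)=O_X(\emptyset)=0$. So no blind proof of the statement as written exists, and your diagnosis that a bare presheaf supports neither existence nor uniqueness of the required gluings is exactly the right one.

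There is also nothing in the paper to compare your argument with step by step: the paper's ``proof'' only asserts that conditions (iii) and (iv) of Hartshorne can be checked, ``or it can be taken as a definition,'' which sidesteps precisely the point you identify. Your proposed repair --- replace the index poset by a basis of opens on which $F$ is already a sheaf (as with $D(f)\mapsto A_f$ on $\spec A$), i.e.\ take the right Kan extension from such a basis --- does yield a sheaf and is a reasonable reading of what is intended, but be aware that it proves a modified statement, not Lemma~\ref{sheaflemma}: the lemma would need to be reworded (for instance indexing over $V\subseteq U$, or over coverings as in the usual sheafification or plus-construction, together with separation/gluing hypotheses on $F$ or on a basis) before any proof can go through.
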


\begin{proof} We can prove the conditions iii) and iv)  in Hartshorne \cite{HH77}, but it follows by, or can be taken as a, definition.
\end{proof}

\begin{definition} The sheaf in Lemma \ref{sheaflemma} is called the sheaf $\mathcal F$ associated to the presheaf $F.$
\end{definition}

Define a presheaf $O_X$ on $X=\spec A$ by $O_X(U)=\underset{\mathfrak p\in U}\prod A_{\mathfrak p}.$

\begin{definition} An affine scheme is the pair $U=(\spec A,\mathcal O_{\spec A}).$ A morphism of affine schemes
$f:(\spec A,\mathcal O_{\spec A})\rightarrow(\spec B,\mathcal O_{\spec B})$
 is a morphism induced by a ring homomorphism $f:B\rightarrow A.$
\end{definition}

\begin{definition}\label{schemedef} A scheme is a topological space $X$ with a sheaf of rings $\mathcal O_X,$ such that $X$ has an open cover of open subsets $U$ such that $$(U,\mathcal O_X|_U)\simeq (\spec A,\mathcal O_{\spec A})$$ for some ring $A.$
\end{definition}

\begin{theorem} The scheme $\mathbb P^n_k$ is a fine moduli for $\mathbb A^n_k/k^\ast.$ 
\end{theorem}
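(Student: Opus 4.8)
The plan is to take the pointing object $I$ to be $\spec k$ — the same datum (living in $\cat{Alg}_k^{op}$, now regarded inside the category of schemes of Definition \ref{schemedef}) that made $A$ a fine moduli for $V$ in the Algebraic Varieties example — and to produce a bijection $\mathbb A^n_k/k^\ast\simeq\mor(\spec k,\mathbb P^n_k)$. Here $\mathbb A^n_k/k^\ast$ is read as the set of orbits of the scaling action of $k^\ast$ on the nonzero points of the affine cone, i.e. the set of lines through the origin; this is exactly the set the projective space is intended to classify, in parallel with $\hmm_k(A,k)\simeq V$.

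First I would make the standard cover explicit: $\mathbb P^n_k=\bigcup_{i=0}^{n}U_i$ with $U_i=\spec A_i$, $A_i=k[t_{li}:l\neq i]$, glued along $U_i\cap U_j$ by the isomorphism of localizations $(A_i)_{t_{ji}}\simeq(A_j)_{t_{ij}}$ given by $t_{li}\mapsto t_{lj}t_{ij}^{-1}$ (and $t_{ji}\mapsto t_{ij}^{-1}$). One checks the cocycle condition on triple overlaps, so that by Definition \ref{schemedef} this is indeed a scheme over $k$; this step is routine bookkeeping.

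Next I would compute $\mor(\spec k,\mathbb P^n_k)$. Since $\spec k$ is a one-point space and the $U_i$ cover $\mathbb P^n_k$, any morphism $\varphi:\spec k\to\mathbb P^n_k$ factors through some $U_i$, and on that chart it is, by the Algebraic Varieties example applied to $A_i$, the same thing as a $k$-algebra map $A_i\to k$, i.e. a tuple $(a_{li})_{l\neq i}\in k^n$. Reading $\varphi$ through the transition maps on the charts it does meet, one finds these tuples are related by $a_{li}=a_{lj}a_{ij}^{-1}$ and $a_{ij}a_{ji}=1$ — precisely the compatibilities satisfied by the ratios $a_l/a_i$ coming from a point $[a_0:\cdots:a_n]$ with $a_i\neq0$. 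Assigning to the orbit $[a_0:\cdots:a_n]$ the morphism whose $U_i$-component, for each $i$ with $a_i\neq0$, is $(a_l/a_i)_{l\neq i}$, gives a well-defined map $\mathbb A^n_k/k^\ast\to\mor(\spec k,\mathbb P^n_k)$; it is surjective because every morphism meets some chart, and injective because agreement on one common chart forces all ratios, hence the two orbits, to coincide. This is the bijection the definition of fine moduli requires, so $\mathbb P^n_k$ is a fine moduli for $\mathbb A^n_k/k^\ast$.

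The main obstacle is the middle step, and it is really a matter of pinning down the framework: one must check that a morphism of schemes in the minimalist sheaf setup of this section (where $\mathcal O_X$ is the sheaf associated to $U\mapsto\prod_{\mathfrak p\in U}A_{\mathfrak p}$) does restrict, over an affine open, to a morphism induced by a ring homomorphism, so that the Algebraic Varieties example can be invoked; and one must verify that the set-theoretic relation ``same $k^\ast$-orbit'' coincides exactly with the identifications imposed by the gluing, with no point lost (the origin) and none glued spuriously. Once these points are settled, the rest follows by the direct computation above.
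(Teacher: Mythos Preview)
Your proposal is correct and is precisely the ``standard construction in Hartshorne'' that the paper invokes: the paper's own proof consists of a single sentence citing \cite{HH77}, so your argument is not a different route but rather an explicit unpacking of the same one. The only thing worth flagging is that you have silently (and correctly) read $\mathbb A^n_k/k^\ast$ as the nonzero points of the $(n{+}1)$-dimensional affine cone modulo scaling, since otherwise the dimensions do not match; with that reading your chart-by-chart bijection is the expected one, and the residual worries you list (affine morphisms come from ring maps, the origin is excluded, no spurious gluing) are exactly the routine verifications Hartshorne carries out.
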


\begin{proof} Follows from the standard construction in Hartshorne, \cite{HH77}.
\end{proof}

We understand now that if $G$ is a nonabelian group acting on a variety $X,$ we cannot expect that there exists a fine moduli in the category of schemes. This is simply because the orbits corresponds to modules over noncommutative algebras $k[G],$ the skew group-algebra over $G.$

The next section looks for a definition of schemes that is equivalent to the previous definition, at least for noetherian domains, and that can be generalized to noncommutative algebras. The main component that cannot be generalized directly, is the localization $A_{\mathfrak p}$ in a prime $\mathfrak p.$

\section{Integral Noetherian Schemes}

\begin{definition} 
Let $A$ be a ring and $\mathfrak p\subset A$ a prime ideal. Consider the morphism $\rho:A\rightarrow\hat A_{\mathfrak p}.$
We define the Hausdorff localization of a ring $A$ in a prime $\mathfrak p\subset A$ as the subring $H^A_{\mathfrak p}\subset\hat A_{\mathfrak p}$ generated by $\rho(A)$ and the set 
$S=\{\rho^{-1}(a)|\text{ when }\rho(a)\text{ is invertible for }a\in A\}.$
\end{definition}

\begin{lemma} When $A$ is a noetherian integral domain, $H^A_{\mathfrak p}\simeq A_{\mathfrak p}.$
\end{lemma}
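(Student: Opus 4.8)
The plan is to show two things: that $H^A_{\mathfrak p}$ is exactly the image of the canonical completion map $c\colon A_{\mathfrak p}\to\hat A_{\mathfrak p}$, and that $c$ is injective. Putting these together gives $A_{\mathfrak p}\xrightarrow{\ \sim\ }\im c=H^A_{\mathfrak p}$, which is the assertion. (Here $\hat A_{\mathfrak p}$ is taken to be the completion of the local ring $A_{\mathfrak p}$ at $\mathfrak m:=\mathfrak p A_{\mathfrak p}$, which is the reading under which the statement is true, and I read the set $S$ as the collection of inverses $\rho(s)^{-1}$ in $\hat A_{\mathfrak p}$ of those $\rho(s)$, $s\in A$, that happen to be units.)

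First I would pin down $S$, i.e.\ describe which $\rho(s)$ are invertible. Since $A$ is noetherian, $A_{\mathfrak p}$ is a noetherian local ring and $\hat A_{\mathfrak p}$ is again noetherian, local and complete, with maximal ideal $\hat{\mathfrak m}=\mathfrak m\,\hat A_{\mathfrak p}$ and residue field $\hat A_{\mathfrak p}/\hat{\mathfrak m}=A_{\mathfrak p}/\mathfrak m=\kappa(\mathfrak p)$. In a local ring an element is a unit precisely when it is not in the maximal ideal, and the composite $A\to\hat A_{\mathfrak p}\to\kappa(\mathfrak p)$ is just the residue map at $\mathfrak p$, so $\rho^{-1}(\hat{\mathfrak m})=\mathfrak p$. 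Hence $\rho(s)$ is a unit in $\hat A_{\mathfrak p}$ iff $s\in A\setminus\mathfrak p$, and $S=\{\rho(s)^{-1}\mid s\in A\setminus\mathfrak p\}$; thus $H^A_{\mathfrak p}$ is the subring generated by $\rho(A)$ together with the inverses of the $\rho(s)$ for $s\notin\mathfrak p$.

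Next, since $\rho$ sends $A\setminus\mathfrak p$ into the units of $\hat A_{\mathfrak p}$, the universal property of localization factors it as $\rho=c\circ\iota$ with $\iota\colon A\to A_{\mathfrak p}$ and $c(a/s)=\rho(a)\rho(s)^{-1}$, and this $c$ is the completion map. The set $\{\rho(a)\rho(s)^{-1}\mid a\in A,\ s\in A\setminus\mathfrak p\}$ is closed under the ring operations — here one uses that $\mathfrak p$ is prime, so $s,t\notin\mathfrak p\Rightarrow st\notin\mathfrak p$ — hence it is a subring, and it is manifestly the smallest one containing $\rho(A)\cup S$. So $H^A_{\mathfrak p}=\im c$. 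Finally, the kernel of a completion map is $\bigcap_{n\ge 1}\mathfrak m^{n}$, and by Krull's intersection theorem applied to the noetherian local ring $A_{\mathfrak p}$ this is $0$, so $c$ is injective and $A_{\mathfrak p}\cong H^A_{\mathfrak p}$.

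The only step that needs genuine care is reading off the unit group of $\hat A_{\mathfrak p}$ correctly, since the description of $S$ — and hence the identification $H^A_{\mathfrak p}=\im c$ — rests entirely on it; the injectivity at the end is a black-box invocation of Krull. I would also note that the integral-domain hypothesis is not actually needed for the argument (noetherianness suffices), although it does make $\rho$ itself injective and is natural in the "integral scheme" setting the section is aiming at.
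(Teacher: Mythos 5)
Your proof is correct and follows essentially the same route as the paper: both identify $H^A_{\mathfrak p}$ with the image of $A_{\mathfrak p}$ in $\hat A_{\mathfrak p}$ (the paper writes this as $A_{\mathfrak p}/\bigcap_{n}\mathfrak m^n$) and then kill the kernel $\bigcap_{n}\mathfrak m^n$ by Krull's intersection theorem, your treatment of the unit condition and of the subring generated by $\rho(A)\cup S$ just being more explicit. The only minor divergence is that you invoke the local-ring form of Krull directly (which, as you observe, makes the domain hypothesis superfluous), whereas the paper uses the form describing $\bigcap_{n}\mathfrak m^n$ as the elements annihilated by some $1+x$, $x\in\mathfrak m$, and then appeals to the domain assumption.
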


\begin{proof} We see that $H^A_{\mathfrak p}\simeq A_{\mathfrak p}/\underset{n\in\mathbb N}\cap\mathfrak m^n,$ where $\mathfrak m$ denotes the maximal ideal in the local ring $A_\mathfrak p.$ Then it follows from Krull's Theorem that $\underset{n\in\mathbb N}\cap\mathfrak m^n$ is the set of elements in $A$ annihilated by $1+x, x\in\mathfrak m.$ The only element annihilated by $1+x$ is $0$ in a noetherian domain, and so the result follows.
\end{proof}

\begin{definition} We will call a scheme $X$ locally Hausdorff if all local rings $A_\mathfrak p\simeq H^A_\mathfrak p.$
\end{definition}

\begin{corollary} A noetherian integral scheme is locally Hausdorff.
\end{corollary}

In the definition of schemes, Definition \ref{schemedef}, if we replace all local rings $A_\mathfrak p$ with $\hat A_\mathfrak p$ we get a formal scheme, which is of great interest because it gives a fine moduli in the category of formal schemes. The only problem is that the category of formal schemes is not an extension of schemes, and so gives a coarser moduli space.

We define a scheme as in Definition \ref{schemedef}, but we replace all local rings $A_\mathfrak p$ with $H^A_\mathfrak p.$ Then we get a different sheaf $\mathcal O_{\spec A}=\mathcal O_{\spec A^H}$ where $A^H=A/\mathfrak h,$ $\mathfrak h=\underset{\mathfrak m,n}\cap\mathfrak m^n,$ the intersection of all powers of all maximal ideals in $A.$
Thus we get an ordinary scheme which is locally Hausdorff, convenient for the replacement of Cauchy sequences in algebra.
 
We now have all we need to define schemes of associative rings.

\section{Associative Schemes over $k$}
In this section we will assume for simplicity that all rings are $k$-algebras with $k$ a field. All modules will be right modules unless otherwise stated, the word ideal means two-sided ideal.

\begin{definition} An associative ring $R$ with exactly $r$ maximal ideals, is called an $r$-pointed ring. When $r=1$ we call it a pointed ring.
\end{definition}

\subsection{Augmented $k$-algebras}
Let $k$ be a field and $r\in\mathbb N_+.$ Let $\{Q_i\}_{i=1}^r$ be a family of pointed $k$-algebras containing $k.$ A $k$-algebra $H$ is augmented over $\oplus_{i=1}^r Q_i$ if it  fits in the diagram 
$$\xymatrix{k^r\ar[r]^\rho\ar[dr]_{\iota}&H\ar@{>>}[d]^{\pi_H}\ar[dr]^{\tilde{\pi}_H}&\\&\oplus_{i=1}^r Q_i\ar@{>>}[r]&\oplus_{i=1}^r Q_i/\mathfrak m_i}$$ where $\mathfrak m_i\subset Q_i$ is the unique maximal ideal.

Let $\mathfrak m=\ker\tilde\pi_H$ which is a two-sided ideal, not necessarily maximal because $k^r\subseteq H/\mathfrak m$ is not a division ring for $r\geq 2.$

\begin{definition} The $\mathfrak m$-adic completion of $H$ is defined as $$\hat H=\underset{\underset{n\geq 0}\leftarrow}\lim H/\mathfrak m^n.$$ Then $\hat H$ fits in the diagram $$\xymatrix{k^r\ar[r]^\rho\ar[dr]_{\id}&\hat H\ar@{>>}[d]^\pi\\&\oplus_{i=1}^r Q_i,}$$ and by definition $\hat{\hat H}\simeq\hat H.$ Any $k$-algebra $S$ augmented over some $\oplus_{i=1}^r Q_i$ with the property that $\hat S\simeq S$ is called complete.
\end{definition}

In \cite{S243} we prove the following Lemma:
\begin{lemma}\label{unitlemma}
Let $\hat H$ be a complete algebra augmented over $k^r.$ Let $\pi_i$ be the composition of the augmentation with the $i$'th projection, and let $\mathfrak m_i=\ker\pi_i.$ If $x\notin\cup_{i=1}^r\mathfrak m_i,$ then $x$ is a unit.
\end{lemma}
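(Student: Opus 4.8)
The plan is to exploit completeness of $\hat H$ in the $\mathfrak m$-adic topology, where $\mathfrak m=\cap_{i=1}^r\mathfrak m_i$ (equivalently $\prod_i\mathfrak m_i$ as a two-sided ideal, since the $\mathfrak m_i$ are comaximal), and to reduce the claim to the standard fact that $1-y$ is invertible whenever $y\in\mathfrak m$ because the geometric series $\sum_{n\geq 0}y^n$ converges in $\hat H$. First I would observe that the hypothesis $x\notin\cup_{i=1}^r\mathfrak m_i$ means that the image $\pi_i(x)\in k$ is nonzero for every $i$; write $\lambda_i=\pi_i(x)\in k^\ast$. Using the augmentation $\rho:k^r\to\hat H$, form the element $e=\rho(\lambda_1^{-1},\dots,\lambda_r^{-1})\in\hat H$, which is a unit (its inverse is $\rho(\lambda_1,\dots,\lambda_r)$, since $\rho$ is a $k$-algebra map and $(\lambda_i^{-1})(\lambda_i)=1$ in each coordinate).

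Next I would consider $z=ex$. Applying $\pi_i$ to $z$ gives $\lambda_i^{-1}\lambda_i=1$ for each $i$, hence $\pi(z)=(1,\dots,1)$, i.e. $z-\rho(1,\dots,1)=z-1\in\ker\pi=\mathfrak m$. Set $y=1-z\in\mathfrak m$. The key step is now that, because $\hat H$ is complete, $\mathfrak m^n\to 0$ and the partial sums $s_N=\sum_{n=0}^{N}y^n$ form a Cauchy sequence whose limit $w=\sum_{n\geq 0}y^n\in\hat H$ satisfies $(1-y)w=w(1-y)=1$; thus $z=1-y$ is a unit with $z^{-1}=w$. Finally $x=e^{-1}z$ is a product of two units, hence a unit, with explicit inverse $x^{-1}=z^{-1}e=w\,\rho(\lambda_1,\dots,\lambda_r)$.

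The one point requiring care — and the main obstacle — is justifying that $z-1$ actually lands in $\mathfrak m$ and that the $\mathfrak m$-adic topology behaves well enough for the geometric series argument: one must check that $\ker\pi$ equals (the closure of) $\mathfrak m$ in $\hat H$, so that an element killed by every $\pi_i$ really is a limit of elements of $\mathfrak m$, and that multiplication is continuous so that $(1-y)\big(\sum_{n=0}^N y^n\big)=1-y^{N+1}\to 1$. Since $\hat H=\varprojlim H/\mathfrak m^n$ by definition and $\hat{\hat H}\simeq\hat H$, these are exactly the structural facts encoded in the definition of a complete augmented algebra, so the verification is routine once the correct element $z=\rho(\lambda_1^{-1},\dots,\lambda_r^{-1})\,x$ has been singled out. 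I would also remark that the comaximality of the $\mathfrak m_i$ (which holds because their images under the $\pi_i$ generate $k^r$) is what lets us treat $\mathfrak m$-adic and $(\prod\mathfrak m_i)$-adic completions interchangeably, should that be needed in writing the continuity step cleanly.
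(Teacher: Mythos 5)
Your proof is correct; note that the paper itself does not prove Lemma \ref{unitlemma} but cites \cite{S243} for it, and your argument is the natural one for this setting: since $\pi\circ\rho=\id_{k^r}$, rescaling $x$ by the unit $\rho(\pi_1(x)^{-1},\dots,\pi_r(x)^{-1})$ reduces to inverting $1-y$ with $y\in\ker\pi=\mathfrak m$, which is done by the geometric series, i.e.\ by the compatible system of inverses $\sum_{n<N}y^n$ in $\hat H/\mathfrak m^N$ together with the identification $\hat H\simeq\underset{\underset{N}\leftarrow}\lim\,\hat H/\mathfrak m^N$ expressing completeness. Two small remarks: the point you flag as the main obstacle is moot, since $\mathfrak m$ is by definition $\ker\pi=\cap_{i=1}^r\mathfrak m_i$, so $z-1\in\mathfrak m$ exactly and no closure argument is needed (only that the canonical map to the completion is an isomorphism, which is the intended meaning of ``complete''). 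Also, the parenthetical claim that $\cap_i\mathfrak m_i$ equals $\prod_i\mathfrak m_i$ by comaximality is not quite right for two-sided ideals in a noncommutative ring (for two comaximal ideals one gets $\mathfrak m_1\cap\mathfrak m_2=\mathfrak m_1\mathfrak m_2+\mathfrak m_2\mathfrak m_1$), but since your argument only ever uses $\mathfrak m=\ker\pi$, nothing in the proof depends on it.
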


From Lemma \ref{unitlemma} it follows (\cite{S243}) that a complete $k$-algebra $\hat H$ augmented over $k^r$ is $r$-pointed, and in particular that $r=1$ implies that $\hat H$  is local.

Let $V_{ij}=k,\ 1\leq i,j\leq r.$  The $r\times r$-matrix $V=(V_{ij})$ is a $k^r$-algebra, and we can consider the tensor algebra $T(V)=\oplus_{i\geq 0}V^{{\otimes_{k^r}}^i}.$ 
Then $$T(V)=\left(\begin{matrix}k[t_{11}]&\cdots&t_{1r}\\
\vdots&\dots&\vdots\\
t_{r1}&\cdots&k[t_{rr}]\end{matrix}\right)$$ is a $k$-algebra, augmented over $k^r$ by $\pi:T(V)\rightarrow k^r.$  We put $T=T(V)/\ker^2\pi$ and name it the $r\times r$ tangent algebra.
\subsection{Completions of Algebras}

Let $\phi:R\rightarrow S$ be a morphism between two $r$-pointed $k^r$-algebras, and let $M$ be a $k^r$-module. Then we have an algebra homomorphism $\Pi(\phi):\enm_{R}(R\otimes_{k^r}M)\rightarrow\enm_S(S\otimes_{k^r}M)$ given by the identity $S\otimes_R(R\otimes_{k^r}M)\simeq S\otimes_{k^r}M.$

\begin{definition}\label{precompdef} Let $A$ be a $k$-algebra and $M=\oplus_{i=1}^rM_i$ a sum of right $A$-modules, $r\geq 1.$ A complete $k$-algebra $\hat H^A_M$ augmented over $k^r$ is a pre-completion of $A$ in $M$ if the following (versal) condition holds: There exists a homomorphism $\hat\rho^A_M:A\rightarrow\enm_{\hat H^A_M}(\hat H^A_M\otimes_{k^r}M)$ fitting in the diagram $$\xymatrix{A\ar[d]_\phi\ar[r]^-{\hat\rho^A_M}\ar[dr]_(0,3){\oplus\eta_{M_i}}&\enm_{\hat H^A_M}(\hat H^A_M\otimes_{k^r}M)\ar[d]^{\Pi(\pi_{\hat H})}\ar@{-->}[dl]_{\Pi(\psi)}\\\enm_S(S\otimes_{k^r}M)\ar[r]_{\Pi(\pi_S)}&\oplus_{i=1}^r\enm_k(M_i)}$$ and such that if $S$ is any other such algebra with a homomorphism $$\phi:A\rightarrow\enm_{S}(S\otimes_{k^r}M)$$ fitting in the diagram, then there exists a morphism $\psi:\hat H^A_M\rightarrow S,$ unique in the case  $S\cong T,$ commuting in the above diagram.
\end{definition}

In \cite{S243} we prove the following theorem and proposition.

\begin{theorem}\label{deftheorem} Let $A$ be a finitely generated associative $k$-algebra. Let $M=\oplus_{i=1}^rM_i$ be a sum of right $A$-modules such that for each $1\leq i,j\leq r$ we have that $\dim_k\ext^1_A(M_i,M_j)<\infty.$ Then the pre-completion of $A$ in $M$ exists and is unique up to isomorphism.
\end{theorem}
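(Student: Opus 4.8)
The plan is to recognize the pre-completion $\hat H^A_M$ as a pro-representing hull for the noncommutative deformation functor of the family $M=\oplus_{i=1}^r M_i$, and then to run Laudal's matric generalization of Schlessinger's criterion. Let $\cacat r$ be the category of complete $k$-algebras augmented over $k^r$, with $\acat r\subset\cacat r$ the full subcategory of those finite-dimensional over $k$, and define $\defm M\colon\cacat r\to\sets$ by letting $\defm M(S)$ be the set of lifts of the $A$-module structure on $M$ to an $S$-$A$-bimodule structure on $S\otimes_{k^r}M$, modulo isomorphisms that are the identity modulo $\mathfrak m_S$. Unwinding the definitions, a homomorphism $A\to\enm_S(S\otimes_{k^r}M)$ that reduces to $\oplus\eta_{M_i}$ over $k^r$ is precisely an element of $\defm M(S)$, so a pre-completion of $A$ in $M$ is nothing but a hull of $\defm M$ — the extra uniqueness demanded when $S\cong T$ being exactly Laudal's requirement that a hull map be an isomorphism on tangent spaces. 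It thus suffices to show that $\defm M$ admits a hull and that hulls are unique up to isomorphism.

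First I would verify the relevant conditions for $\defm M$. For deformation-functoriality one checks that for a small surjection $S''\twoheadrightarrow S$ with kernel annihilated by $\mathfrak m_{S''}$ the canonical map $\defm M(S'\times_S S'')\to\defm M(S')\times_{\defm M(S)}\defm M(S'')$ is surjective, and bijective when $S'\to S$ is also surjective; this is the standard gluing argument, resting on the identity $(S'\times_S S'')\otimes_{k^r}M=(S'\otimes_{k^r}M)\times_{S\otimes_{k^r}M}(S''\otimes_{k^r}M)$ of underlying modules, so that a compatible pair of bimodule structures yields one over the fibre product. Evaluating at the matric dual numbers identifies the tangent space with $t_{\defm M}=\oplus_{i,j}\ext^1_A(M_i,M_j)$, which is finite-dimensional by hypothesis. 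Finally, choosing for each $i$ an $A$-free resolution $L^\bullet_i\to M_i$, the associated Yoneda cocycle computation attaches to every small extension $0\to K\to S''\to S\to 0$ and every $\xi\in\defm M(S)$ an obstruction $o(\xi)\in\bigl(\oplus_{i,j}\ext^2_A(M_i,M_j)\bigr)\otimes_k K$ that vanishes precisely when $\xi$ lifts to $S''$, the set of lifts, when nonempty, being a principal homogeneous space under $t_{\defm M}\otimes_k K$.

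With this obstruction theory in hand I would build the hull as an inverse limit, following Laudal. Let $V'=(V'_{ij})$ with $\dim_k V'_{ij}=\dim_k\ext^1_A(M_i,M_j)$, let $T=T(V')/(\ker\pi)^2$ be the corresponding $r\times r$ tangent algebra, which carries the universal first-order deformation of $M$, and set $H_2=T$. Inductively one produces a tower $T(V')\twoheadrightarrow\cdots\twoheadrightarrow H_{n+1}\twoheadrightarrow H_n\twoheadrightarrow\cdots\twoheadrightarrow H_2$ in $\acat r$ together with deformations $\xi_n\in\defm M(H_n)$ that restrict compatibly, the relations cutting $H_{n+1}$ out of $H_n$ being exactly the image of the obstruction to lifting $\xi_n$ one step — concretely, a defining system of matric Massey products $\langle\xi_n,\dots,\xi_n\rangle\subseteq\oplus_{i,j}\ext^2_A(M_i,M_j)$. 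Because $\dim_k\ext^1_A(M_i,M_j)<\infty$, every homogeneous component of $T(V')$ is finite-dimensional, so each $H_n$ genuinely lies in $\acat r$; then $\hat H^A_M:=\varprojlim_n H_n$ is complete, $r$-pointed (Lemma \ref{unitlemma}) and augmented over $k^r$, and one takes $\hat\rho^A_M:=\varprojlim_n\rho_{\xi_n}$, where $\rho_{\xi_n}\colon A\to\enm_{H_n}(H_n\otimes_{k^r}M)$ classifies $\xi_n$. For versality, given any $S$ and $\phi\colon A\to\enm_S(S\otimes_{k^r}M)$ as in Definition \ref{precompdef}, i.e.\ a deformation $\eta\in\defm M(S)$, one lifts inductively a compatible system $\psi_n\colon H_n\to S/\mathfrak m_S^n$ with $\psi_n^*\xi_n\equiv\eta\pmod{\mathfrak m_S^n}$: the obstruction to each extension step lies in $\bigl(\oplus_{i,j}\ext^2_A(M_i,M_j)\bigr)\otimes_k(\mathfrak m_S^n/\mathfrak m_S^{n+1})$ and is killed by the defining relations of $H_{n+1}$, so $\psi:=\varprojlim_n\psi_n\colon\hat H^A_M\to S$ exists and $\Pi(\psi)$ is the required dotted arrow making the diagram of Definition \ref{precompdef} commute. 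When $S\cong T$ we have $\mathfrak m_S^2=0$, so $\psi$ factors through $\hat H^A_M/\mathfrak m^2\cong T$ and is there pinned down by its action on $\mathfrak m/\mathfrak m^2$, which the condition $\psi^*\xi\equiv\eta$ forces; hence $\psi$ is unique in that case, and consequently any pre-completion induces an isomorphism on tangent spaces. Uniqueness up to isomorphism now follows: two pre-completions $\hat H$ and $\hat H'$ yield, by versality, maps $\psi\colon\hat H\to\hat H'$ and $\psi'\colon\hat H'\to\hat H$ compatible with the versal deformations, and each composite induces the identity on $\mathfrak m/\mathfrak m^2$; a surjective (graded Nakayama) endomorphism that at every finite level $H_n$ is an endomorphism of a $k$-algebra of fixed finite dimension is bijective, so $\psi$ is an isomorphism.

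I expect the substantive obstacle to be the obstruction calculus itself: one must make precise the $A_\infty$- (equivalently, matric Massey product) structure on the $k^r$-bimodule-graded algebra $\oplus_{i,j}\ext^\bullet_A(M_i,M_j)$, prove that the obstruction of each small extension really lands in $\oplus_{i,j}\ext^2_A(M_i,M_j)$ tensored with the kernel and is independent of the auxiliary resolutions, and show that the relations defining $H_{n+1}$ over $H_n$ are exactly the images of these obstructions. Finite generation of $A$ enters in modelling $\ext^\bullet$ by complexes tame enough for the Massey products to be well defined, while the hypothesis $\dim_k\ext^1_A(M_i,M_j)<\infty$ is precisely what forces each $H_n$ to be finite-dimensional, hence $\hat H^A_M$ to be complete and the Nakayama and dimension-count arguments to be legitimate. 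A detailed development is the content of \cite{S243}.
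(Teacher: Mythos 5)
Your sketch is essentially the intended argument: the paper gives no proof of Theorem \ref{deftheorem} itself but defers it to \cite{S243}, where the pre-completion is obtained exactly as you describe, namely as the pro-representing hull (in Laudal's sense) of the noncommutative deformation functor of the family $M$, built from the tangent algebra on $\oplus_{i,j}\ext^1_A(M_i,M_j)$ by dividing out matric Massey product obstructions in $\ext^2$, with versality, the uniqueness of $\psi$ when $S\cong T$, and uniqueness of the hull established by the standard Schlessinger-type lifting and tangent-level arguments. So the proposal is correct and follows essentially the same route as the cited proof.
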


\begin{proposition} Assume that $A$ is a $k$-algebra augmented over $k^r$ and let $M_i=A/\ker(p_i\circ\pi_A)\simeq k,$ where $p_i:k^r\rightarrow k$ is the $i$'th projection. Then the $\mathfrak m$-adic completion $\hat A$ of $A$ is a pre-completion of $A$ in $M=\oplus_{i=1}^r M_i.$
\end{proposition}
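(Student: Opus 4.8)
The plan is to verify Definition \ref{precompdef} directly for $\hat A$, the point being that here the family $M$ is as simple as possible. Since $M_i=A/\ker(p_i\pi_A)\simeq k$ and $\pi_A\circ\rho=\id_{k^r}$, the idempotent $\rho(e_j)$ acts on $M_i$ as the scalar $p_i(\pi_A\rho(e_j))=\delta_{ij}$, so $M\simeq k^r$ as a $k^r$-module; fix such an isomorphism. Then for every $k^r$-algebra $R$ we get a natural isomorphism $R\otimes_{k^r}M\simeq R$ of left $R$-modules, hence an identification of algebras $\enm_R(R\otimes_{k^r}M)\simeq R$ (an $R$-linear endomorphism of the rank-one free module $R$ is multiplication by an element of $R$, and the block $e_iRe_j$ records the homomorphisms between the $i$-th and $j$-th summands of $R\otimes_{k^r}M$). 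Under this identification $\Pi(\pi_R)\colon\enm_R(R\otimes_{k^r}M)\to\oplus_{i=1}^r\enm_k(M_i)=k^r$ becomes the augmentation $R\twoheadrightarrow R/\mathfrak m_R=k^r$, and $\Pi(\psi)$ for $\psi\colon R\to R'$ becomes $\psi$. Since moreover $\oplus_{i=1}^r\eta_{M_i}=\pi_A$, to give a pair $(R,\phi)$ with $\phi\colon A\to\enm_R(R\otimes_{k^r}M)$ and $\Pi(\pi_R)\circ\phi=\oplus\eta_{M_i}$ is the same as to give a $k$-algebra homomorphism $\bar\phi\colon A\to R$ with $\pi_R\circ\bar\phi=\pi_A$. (Note $\hat A$ is complete, by the remark following the definition of completion, and augmented over $k^r$ via its projection $\hat A\to A/\mathfrak m=k^r$, whose kernel we denote $\hat{\mathfrak m}$.)

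Granting this dictionary, I take $\hat\rho^A_M$ to be the structure morphism corresponding to the canonical map $\iota\colon A\to\hat A$: indeed $\pi_{\hat A}\circ\iota$ is the quotient $A\to A/\mathfrak m=k^r$, i.e.\ is $\pi_A$, so the required triangle commutes. For versality, let $S$ be a complete $k$-algebra augmented over $k^r$ equipped with $\phi\colon A\to\enm_S(S\otimes_{k^r}M)$ fitting the diagram; by the dictionary this is a homomorphism $\bar\phi\colon A\to S$ with $\pi_S\circ\bar\phi=\pi_A$, so $\bar\phi(\mathfrak m)\subseteq\mathfrak m_S$ and hence $\bar\phi(\mathfrak m^n)\subseteq\mathfrak m_S^n$ for all $n$. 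As $S$ is complete, $S=\varprojlim_n S/\mathfrak m_S^n$, so $\bar\phi$ induces a compatible family $A/\mathfrak m^n\to S/\mathfrak m_S^n$, hence a continuous homomorphism $\psi\colon\hat A=\varprojlim_n A/\mathfrak m^n\to S$ with $\psi\circ\iota=\bar\phi$; transporting back, $\Pi(\psi)\circ\hat\rho^A_M=\phi$. Finally, any $\psi$ making the diagram commute is compatible with the augmentations, so carries $\hat{\mathfrak m}$ into $\mathfrak m_S$, so is $\mathfrak m$-adically continuous, so is determined by its values on the dense subalgebra $\iota(A)\subseteq\hat A$; hence $\psi$ is unique, in particular when $S\cong T$.

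The single step that requires genuine care — the rest being formal — is the first paragraph: fixing the left/right-module and opposite-algebra conventions implicit in Definition \ref{precompdef} so that $\enm_R(R\otimes_{k^r}M)$ really is $R$ and the defining diagram really does encode a homomorphism $A\to R$ reducing to $\pi_A$. Once that is in place, the pre-completion property is exactly the universal property of the $\mathfrak m$-adic completion among complete augmented $k^r$-algebras. This argument is direct and does not invoke Theorem \ref{deftheorem}, so it applies even when $A$ is not finitely generated; when $A$ is finitely generated it identifies, together with the uniqueness in Theorem \ref{deftheorem}, the pre-completion of $A$ in $M$ with $\hat A$.
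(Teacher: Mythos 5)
Your main line of argument is correct, and since the paper states this proposition without proof (deferring to \cite{S243}), there is no in-text argument to compare against; your reduction is surely the intended one. Because each $M_i\simeq k$ with $A$ acting through $p_i\circ\pi_A$, one has $M\simeq k^r$ as a $k^r$-module, hence $\enm_R(R\otimes_{k^r}M)\simeq R$ for any algebra $R$ augmented over $k^r$ (with the operator conventions you rightly flag as the only delicate identification), $\Pi(\pi_R)$ becomes the augmentation, $\Pi(\psi)$ becomes $\psi$, and $\oplus_{i}\eta_{M_i}=\pi_A$; the versal condition of Definition \ref{precompdef} then translates into the universal property of the $\mathfrak m$-adic completion among complete algebras augmented over $k^r$, and your construction of $\psi$ from $\bar\phi(\mathfrak m^n)\subseteq\mathfrak m_S^n$ together with $S=\lim_n S/\mathfrak m_S^n$ is the standard argument for the existence (i.e.\ versality) half.

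The one place where your write-up is loose is the closing uniqueness paragraph. You argue that any admissible $\psi$ is $\hat{\mathfrak m}$-adically continuous and hence determined on the dense subalgebra $\iota(A)$; but $\iota(A)$ is dense in $\hat A$ for the inverse-limit topology, while the continuity you establish is for the $\hat{\mathfrak m}$-adic topology, and these agree only under finiteness hypotheses (e.g.\ $\mathfrak m$ finitely generated, in particular $A$ finitely generated as in Theorem \ref{deftheorem}), since in general $\hat{\mathfrak m}^n$ can be strictly smaller than $\ker(\hat A\to A/\mathfrak m^n)$. Note that Definition \ref{precompdef} only requires uniqueness when $S\cong T$, so you are claiming more than is needed; but even in that required case the same issue appears: to see that two maps to $T$ agreeing on $\iota(A)$ and killing $\hat{\mathfrak m}^2$ coincide, one needs something like $\ker(\hat A\to A/\mathfrak m^2)\subseteq\iota(A)+\hat{\mathfrak m}^2$. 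Under the finiteness assumptions in force elsewhere in the paper this is standard and your proof is complete as stated; in the stated generality you should either invoke such a hypothesis explicitly or restrict the uniqueness claim to $S\cong T$ and justify that inclusion.
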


\begin{definition}
Let $A$ be a $k$-algebra and  $M=\oplus_{i=1}^rM_i$ a  sum of $r$ right $A$-modules. If $A$ has a pre-completion $\hat H^A_M$ we define the completion of $A$ in $M$ as $$\hat O^A_M=\enm_{\hat H^A_M}(\hat H^A_M\otimes_{k^r}M).$$
\end{definition}

By definition, the completion of $A$ in $M,$ if it exists, comes with a homomorphism $\rho^A_M:A\rightarrow\hat O^A_M$ commuting in the diagram $$\xymatrix{A\ar[r]^{\rho^A_M}\ar[dr]_{\oplus_{i=1}^r\eta_{M_i}}&\hat O^A_M\ar@{>>}[d]^\pi\\&\oplus_{i=1}^r\enm_k(M_i).}$$ 
Also, $\hat O^A_M$ is augmented over $\oplus_{i=1}^r\enm_k(M_i)$ where each $\enm_k(M_i)$ is a pointed $k$-algebra.

\begin{proposition} The completion of $A$ in $M$ if it exists, is complete.
\end{proposition}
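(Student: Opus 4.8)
The plan is to analyse the $\mathfrak m$-adic tower of $\hat O^A_M$ level by level and identify its inverse limit. Write $\hat H=\hat H^A_M$ and let $\mathfrak n=\ker(\hat H\to k^r)$ be the augmentation ideal of the pre-completion; since $\hat H^A_M$ is by hypothesis a \emph{complete} $k$-algebra augmented over $k^r$, this says precisely that $\hat H\cong\varprojlim_n\hat H/\mathfrak n^{\,n}$ with its natural (surjective) transition maps. Using the orthogonal idempotents $e_1,\dots,e_r\in k^r$ one has $P:=\hat H\otimes_{k^r}M=\bigoplus_{i=1}^r(\hat He_i)\otimes_kM_i$, a projective left $\hat H$-module, and $\hat O^A_M=\enm_{\hat H}(P)$; write $\mathfrak m=\ker\big(\pi\colon\hat O^A_M\to\bigoplus_i\enm_k(M_i)\big)$ for its augmentation ideal. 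Because $k^r$ is semisimple, $M$ is flat over it, so $\mathfrak n^{\,j}\otimes_{k^r}M$ maps isomorphically onto the submodule $\mathfrak n^{\,j}P\subseteq P$, and $P/\mathfrak n^{\,j}P\cong(\hat H/\mathfrak n^{\,j})\otimes_{k^r}M$.

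The first step is the bookkeeping identity
$$\mathfrak m^{\,n}=\{\,f\in\enm_{\hat H}(P)\ \colon\ f(P)\subseteq\mathfrak n^{\,n}P\,\}\qquad(n\geq 1).$$
The inclusion ``$\subseteq$'' is the easy one: $f\in\mathfrak m$ means the reduction of $f$ on $P/\mathfrak nP$ vanishes, i.e. $f(P)\subseteq\mathfrak nP$; and since $f$ is left $\hat H$-linear and $\mathfrak n^{\,j}$ is a two-sided ideal, $f(\mathfrak n^{\,j}P)=\mathfrak n^{\,j}f(P)\subseteq\mathfrak n^{\,j+1}P$, so by induction any product of $n$ elements of $\mathfrak m$ carries $P$ into $\mathfrak n^{\,n}P$. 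For ``$\supseteq$'' I would pass to the description of $\enm_{\hat H}(P)$ as the generalized matrix ring attached, via the idempotents $e_i$, to the corner rings $e_i\hat He_j$: the augmentation $\pi$ kills precisely the matrix entries lying in $\mathfrak n$, so $\mathfrak m$ is the ``matrix ideal'' of $\mathfrak n$, and the matrix identity $\matr(I)\cdot\matr(J)=\matr(IJ)$ for two-sided ideals $I,J$ of $\hat H$ (applied corner by corner, using $\sum_ie_i=1$) gives that $\mathfrak m^{\,n}$ is the matrix ideal of $\mathfrak n^{\,n}$, which is exactly the right-hand side.

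Granting this, the reduction $\hat H\twoheadrightarrow\hat H/\mathfrak n^{\,n}$ induces, by functoriality of the construction $N\mapsto\enm_N(N\otimes_{k^r}M)$ (i.e. by applying $\Pi$), a homomorphism
$$\hat O^A_M\longrightarrow\enm_{\hat H/\mathfrak n^{\,n}}\!\big((\hat H/\mathfrak n^{\,n})\otimes_{k^r}M\big)$$
which is surjective, because endomorphisms of the projective module $P/\mathfrak n^{\,n}P$ lift along $P\twoheadrightarrow P/\mathfrak n^{\,n}P$, and whose kernel is $\{f\colon f(P)\subseteq\mathfrak n^{\,n}P\}=\mathfrak m^{\,n}$ by the identity just proved. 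Hence $\hat O^A_M/\mathfrak m^{\,n}\xrightarrow{\ \sim\ }\enm_{\hat H/\mathfrak n^{\,n}}((\hat H/\mathfrak n^{\,n})\otimes_{k^r}M)$. Passing to $\varprojlim_n$ and using that both $-\otimes_{k^r}M$ and the passage to the generalized matrix ring are blockwise operations that commute with $\varprojlim$ (each $e_i\hat He_j$ is a direct summand of $\hat H$, so $\varprojlim_n e_i\hat He_j/e_i\mathfrak n^{\,n}e_j=e_i\hat He_j$ by completeness of $\hat H$), we obtain
$$\widehat{\hat O^A_M}=\varprojlim_n\hat O^A_M/\mathfrak m^{\,n}\ \cong\ \enm_{\hat H}\big(\hat H\otimes_{k^r}M\big)=\hat O^A_M,$$
which is exactly the claim that $\hat O^A_M$ is complete.

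The genuinely delicate point is the ``$\supseteq$'' half of the bookkeeping identity together with the surjectivity used afterwards: both rest on pinning down $\enm_{\hat H}(P)$ concretely as a matrix ring --- in general infinite and row/column-finite --- over the corner rings $e_i\hat He_j$, and on the elementary but slightly fussy identity relating powers of a matrix ideal to the matrix ideal of the power; when the $M_i$ are not finite-dimensional one must also check that row/column-finiteness is preserved under the relevant products and lifts, so the finitely generated case (all $\dim_kM_i<\infty$, hence $P$ finitely generated projective) is the clean one. Everything else --- flatness over $k^r$, exactness of $\varprojlim$ along these towers with surjective transition maps, and the commutation of $\varprojlim$ with the blockwise constructions --- is routine.
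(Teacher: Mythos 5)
The paper itself offers no argument for this proposition --- it is one of the statements explicitly deferred to \cite{S243} (``For a proofs, see \cite{S243}''), so there is no in-paper proof to compare yours against; I can only assess your argument on its own terms. Its skeleton is the natural one and is essentially sound: identify $\mathfrak m^n$ with $\{f\in\enm_{\hat H}(P):f(P)\subseteq\mathfrak n^nP\}$, deduce $\hat O^A_M/\mathfrak m^n\simeq\enm_{\hat H/\mathfrak n^n}\bigl((\hat H/\mathfrak n^n)\otimes_{k^r}M\bigr)$ using projectivity of $P$ over $\hat H$ for surjectivity (note it is projectivity of $P$, not of $P/\mathfrak n^nP$, that you actually use when lifting), and then let the completeness of $\hat H$, read blockwise on the corners $e_i\hat He_j$, carry the inverse limit back to $\enm_{\hat H}(P)$.

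Two points deserve more care than your sketch gives them. First, the ``$\supseteq$'' half of the bookkeeping identity in the block ($r\geq 2$) situation is not just the one-ring identity $\matr(I)\matr(J)=\matr(IJ)$: you must insert $1=\sum_le_l$ to write $e_i\mathfrak n^ne_j$ as sums of products $(e_i\mathfrak ne_{l_1})(e_{l_1}\mathfrak ne_{l_2})\cdots(e_{l_{n-1}}\mathfrak ne_j)$, and you must also factor an arbitrary linear map $M_i\to M_j$ as a sum of maps factoring through the intermediate nonzero spaces $M_{l_t}$ (rank-one decomposition); both steps are elementary but they are the actual content and should be written out. Second, your argument really lives in the case $\dim_kM_i<\infty$, which you do flag: only then is $\enm_k(M_i)$ simple, so that the ideal $\ker\bigl(\hat O^A_M\to\oplus_i\enm_k(M_i)\bigr)$ you complete along agrees with the ideal $\ker\bigl(\hat O^A_M\to\oplus_i\enm_k(M_i)/\mathfrak m_i\bigr)$ appearing in the paper's definition of completeness of an augmented algebra, only then is $\enm_{\hat H}(P)$ the finite block-matrix ring $\oplus_{i,j}\hmm_k(M_i,M_j)\otimes_ke_i\hat He_j$ with which tensoring commutes with $\varprojlim$, and only then is $P\simeq\varprojlim P/\mathfrak n^nP$. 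For infinite-dimensional $M_i$ the row/column-finiteness issue you mention is a genuine obstruction to the final interchange of limits, so as written your proof establishes the proposition in the finite-dimensional case and leaves the general case open.
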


\begin{proposition} The completion $\hat O^A_M$ of $A$ in a sum of right  modules $M=\oplus_{i=1}^rM_i$ has $\tilde M=\{M_i\}_{i=1}^r$ as its set of simple right and left modules.
\end{proposition}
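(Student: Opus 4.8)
The plan is to identify the Jacobson radical $J(\hat O^A_M)$ with the kernel $\mathfrak J$ of the augmentation $\pi\colon\hat O^A_M\twoheadrightarrow\oplus_{i=1}^r\enm_k(M_i)$, so that $\hat O^A_M/J(\hat O^A_M)\cong\oplus_{i=1}^r\enm_k(M_i)$ is a finite product of full matrix algebras over $k$, and then to read off the simple modules from this semisimple quotient.

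First I would prove $\mathfrak J\subseteq J(\hat O^A_M)$. By the preceding proposition $\hat O^A_M$ is complete, which here means it is $\mathfrak J$-adically complete: $\hat O^A_M\simeq\varprojlim_n\hat O^A_M/\mathfrak J^n$ and $\bigcap_n\mathfrak J^n=0$. Hence for every $x\in\mathfrak J$ the series $\sum_{n\geq0}(-x)^n$ converges in $\hat O^A_M$ to a two-sided inverse of $1+x$, so $1+\mathfrak J\subseteq(\hat O^A_M)^{\times}$; since $\mathfrak J$ is a two-sided ideal this forces $\mathfrak J\subseteq J(\hat O^A_M)$. For the reverse inclusion, each $M_i$ is finite-dimensional over $k$ — which is exactly what makes each $\enm_k(M_i)$ a pointed $k$-algebra — so $\hat O^A_M/\mathfrak J\cong\oplus_{i=1}^r\enm_k(M_i)\cong\oplus_{i=1}^r\matr_{d_i}(k)$ with $d_i=\dim_k M_i$ is semisimple, whence $J(\hat O^A_M)\subseteq\mathfrak J$. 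Thus $J(\hat O^A_M)=\mathfrak J$.

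Next, since every simple right (respectively left) $\hat O^A_M$-module is annihilated by $J(\hat O^A_M)=\mathfrak J$, it is a simple right (respectively left) module over $\oplus_{i=1}^r\matr_{d_i}(k)$. A finite product of simple artinian rings has exactly one isomorphism class of simple right module and one of simple left module per factor, and for the factor $\enm_k(M_i)$ both are realized by $M_i$ with its natural $\enm_k(M_i)$-action; moreover restricting this module along $\rho^A_M\colon A\to\hat O^A_M$ recovers the original $A$-module $M_i$, because $\pi\circ\rho^A_M=\oplus_{i=1}^r\eta_{M_i}$. The $M_i$ are pairwise non-isomorphic as $\hat O^A_M$-modules, since the idempotent of the $i$-th block acts as the identity on $M_i$ and as zero on $M_j$ for $j\neq i$. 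Hence $\tilde M=\{M_i\}_{i=1}^r$ is exactly the set of simple right and of simple left $\hat O^A_M$-modules.

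The step I expect to be the main obstacle is $\mathfrak J\subseteq J(\hat O^A_M)$, because that is where completeness of $\hat O^A_M$ is genuinely used and where one must check that the augmentation kernel $\mathfrak J=\ker\pi$ really is the ideal with respect to which $\hat O^A_M$ is complete; this holds here precisely because each $\enm_k(M_i)=\matr_{d_i}(k)$ is simple, so $\oplus_{i=1}^r\enm_k(M_i)/\mathfrak m_i=\oplus_{i=1}^r\enm_k(M_i)$ and there is no discrepancy. Alternatively one can sidestep the radical computation: $\hat H^A_M$ is semiperfect, being complete with $\hat H^A_M/J(\hat H^A_M)\cong k^r$ semisimple by Lemma \ref{unitlemma}, and $\hat H^A_M\otimes_{k^r}M$ is a finitely generated projective $\hat H^A_M$-module, so $\hat O^A_M=\enm_{\hat H^A_M}(\hat H^A_M\otimes_{k^r}M)$ is semiperfect with $\hat O^A_M/J(\hat O^A_M)\cong\enm_{k^r}\bigl((\hat H^A_M/J(\hat H^A_M))\otimes_{k^r}M\bigr)=\enm_{k^r}(M)=\oplus_{i=1}^r\enm_k(M_i)$, after which the module count is the same.
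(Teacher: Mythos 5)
Your argument is sound, and it is worth noting that the paper itself gives no proof of this proposition (it defers to the reference [S243], "Associative Local Function Rings"), so there is no in-text argument to compare against; your route via the Jacobson radical is the natural one and almost certainly the intended one. The two real pressure points are exactly the ones you identify and handle: (i) that the ideal with respect to which $\hat O^A_M$ is complete (in the paper's sense of completeness of an algebra augmented over $\oplus_{i=1}^r\enm_k(M_i)$, namely the kernel of the map to $\oplus_i\enm_k(M_i)/\mathfrak m_i$) coincides with the augmentation kernel $\mathfrak J=\ker\pi$, which holds because each $\enm_k(M_i)$ is simple artinian, i.e.\ $\mathfrak m_i=0$; and (ii) the finite-dimensionality of each $M_i$ over $k$, which the paper never states explicitly but uses implicitly when it asserts that each $\enm_k(M_i)$ is a pointed $k$-algebra with $\oplus_i\enm_k(M_i)$ playing the role of the semisimple quotient — you make this hypothesis explicit, which is a genuine improvement in precision rather than a gap. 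Once $J(\hat O^A_M)=\mathfrak J$ is established, reading off the simple right and left modules from $\oplus_i\matr_{d_i}(k)$, checking pairwise non-isomorphism via the block idempotents, and using $\pi\circ\rho^A_M=\oplus_i\eta_{M_i}$ to identify the restrictions to $A$ with the original $M_i$ is exactly the right bookkeeping. Your alternative semiperfect argument ($\hat H^A_M$ complete over $k^r$, hence $\hat H^A_M/J\cong k^r$ by the unit lemma, and $\enm$ of a finitely generated projective module over a semiperfect ring is semiperfect with the expected semisimple quotient) is also valid under the same finiteness hypothesis and has the advantage of exposing that the whole statement is really a Morita-type transfer from $\hat H^A_M$ to its endomorphism ring; either version would serve as a complete proof at the paper's level of rigor.
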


For a proofs, see \cite{S243}.

\begin{example} Let $A$ be a commutative $k$-algebra, $k$ algebraically closed and $A$ finitely generated. Let $M=\oplus_{i=1}^r A/\mathfrak m_i$ where $\mathfrak m_i,\ 1\leq i\leq r,$ are maximal ideals. Then by the versal properties it follows that $\hat O^A_M\simeq\oplus_{i=1}^r\hat A_{\mathfrak m_i}.$ 
\end{example}

\subsection{The Ring of Local Functions}

All the results in this subsection  is proved in \cite{S243}.

\begin{definition} Let $M=\oplus_{i=1}^r M_i$ be a sum of right $A$-modules. Then the ring $O^A_M$ of functions locally defined at $M$ is defined as the sub-algebra of $\hat O^A_M$ generated by  $\rho^A_M(A)$ (over $k^r$), together with $\rho^A_M(a)^{-1}$ whenever $\rho^A_M(a)$ is a unit.
\end{definition}

\begin{proposition} The $k$-algebra $O^A_M$ is augmented over $\oplus_{i=1}^r\enm_k(M_i),$ and $$O^{O^A_M}_M\simeq O^A_M.$$
\end{proposition}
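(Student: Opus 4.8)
The plan is to treat the two assertions in turn, always working inside the complete algebra $\hat O^A_M=\enm_{\hat H^A_M}(\hat H^A_M\otimes_{k^r}M)$ and using that, by definition, $O^A_M$ is the $k^r$-subalgebra of $\hat O^A_M$ generated by $\rho^A_M(A)$ together with the inverses $\rho^A_M(a)^{-1}$ of those $\rho^A_M(a)$ that happen to be units in $\hat O^A_M$. Write $\pi\colon\hat O^A_M\twoheadrightarrow\oplus_{i=1}^r\enm_k(M_i)$ for the augmentation, $\pi_i$ for its components, and recall that the $M_i$ are finite-dimensional, pairwise non-isomorphic and absolutely simple, so that by the defining diagram of the completion the composite $A\xrightarrow{\rho^A_M}\hat O^A_M\xrightarrow{\pi}\oplus_i\enm_k(M_i)$ is the module structure map $\oplus_i\eta_{M_i}$, which is onto by the Jacobson density theorem. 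The first assertion is then immediate: restrict $\pi$ to the subalgebra $O^A_M$; the structure map $k^r\to\hat O^A_M$ factors through $O^A_M$ and the relation $\pi\circ(k^r\to\hat O^A_M)=\iota$ survives restriction, while $\pi|_{O^A_M}$ is still surjective because $\rho^A_M(A)\subseteq O^A_M$. Hence $O^A_M$ is augmented over $\oplus_{i=1}^r\enm_k(M_i)$, with the $r$ distinguished ideals $\mathfrak n_i:=\ker\pi_i\cap O^A_M$.

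For the idempotence $O^{O^A_M}_M\simeq O^A_M$ the first step is to show that completing $O^A_M$ at $M$ recovers $\hat O^A_M$, with the comparison map $\rho^{O^A_M}_M\colon O^A_M\to\hat O^{O^A_M}_M$ identified with the inclusion $O^A_M\hookrightarrow\hat O^A_M$. I would argue this by a versal-hull comparison: the inclusion $O^A_M\hookrightarrow\hat O^A_M=\enm_{\hat H^A_M}(\hat H^A_M\otimes M)$ fits the pre-completion diagram of Definition \ref{precompdef} for $O^A_M$-modules (taking $S=\hat H^A_M$), while the composite $A\xrightarrow{\rho^A_M}O^A_M\xrightarrow{\hat\rho^{O^A_M}_M}\hat O^{O^A_M}_M$ fits the one for $A$-modules (taking $S=\hat H^{O^A_M}_M$); feeding these into the respective versal properties produces morphisms $\hat H^A_M\to\hat H^{O^A_M}_M$ and $\hat H^{O^A_M}_M\to\hat H^A_M$. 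These are mutually inverse isomorphisms because $\rho^A_M$ restricts to an equivalence between the $A$-modules that are finite iterated extensions of the $M_i$ and the corresponding $O^A_M$-modules — $A$, $O^A_M$ and $\hat O^A_M$ all having $\{M_i\}$ as their simple modules supported at $M$, with the same extension groups between them, by the results of \cite{S243} — so the non-commutative deformation functor of $M$ is the same over $A$ and over $O^A_M$, hence so is its versal hull. Evaluating $\rho^{O^A_M}_M$ on the generators of $O^A_M$, it then fixes $\rho^A_M(a)$ and $\rho^A_M(a)^{-1}$ inside $\hat O^A_M$, so it restricts to the inclusion and in particular $\rho^{O^A_M}_M(O^A_M)=O^A_M$ as subalgebras of $\hat O^A_M$.

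Consequently $O^{O^A_M}_M$ is the subalgebra of $\hat O^A_M$ generated by $O^A_M$ together with the inverses $b^{-1}$ of those $b\in O^A_M$ that are units in $\hat O^A_M$; the inclusion $O^A_M\subseteq O^{O^A_M}_M$ is immediate, so everything reduces to showing $O^A_M$ is closed under forming such inverses. Given $b\in O^A_M$ a unit in $\hat O^A_M$: since $\hat O^A_M$ is complete with $\oplus_i\enm_k(M_i)$ as semisimple quotient, $b$ is a unit precisely when $\pi(b)$ is; using the surjectivity from the first paragraph, pick $a\in A$ with $\pi(\rho^A_M(a))=\pi(b)$, so $\rho^A_M(a)$ is also a unit in $\hat O^A_M$, $\rho^A_M(a)^{-1}\in O^A_M$ by definition, and $c:=\rho^A_M(a)^{-1}b\in O^A_M$ has $\pi(c)=1$, i.e. $c\in 1+(\ker\pi\cap O^A_M)$. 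So it suffices to know $1+(\ker\pi\cap O^A_M)\subseteq (O^A_M)^{\times}$ — equivalently that $\ker\pi\cap O^A_M$ is the Jacobson radical of $O^A_M$, i.e. that $O^A_M$ is $r$-pointed with points exactly the $\mathfrak n_i$; then $c^{-1}\in O^A_M$, hence $b^{-1}=c^{-1}\rho^A_M(a)^{-1}\in O^A_M$ and $O^{O^A_M}_M=O^A_M$. This last point is the main obstacle: Lemma \ref{unitlemma} furnishes such a statement only for complete algebras, whereas $O^A_M$ is deliberately not complete, so one cannot simply sum the geometric series $(1-m)^{-1}=\sum_{n\ge 0}m^n$ for $m\in\ker\pi\cap O^A_M$ and stay inside $O^A_M$. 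It is the non-commutative counterpart of the elementary fact that in a semilocalization of a commutative ring the non-units fill the union of the finitely many maximal ideals — in the commutative noetherian domain case it reduces to $O^A_M\cong A_S$, $S=A\setminus\bigcup_i\mathfrak m_i$, where it is obvious — and establishing it in general, together with the preliminary fact that $O^A_M$ (not finitely generated over $k$, hence not covered verbatim by Theorem \ref{deftheorem}) nonetheless admits the pre-completion making $O^{O^A_M}_M$ meaningful, is the substantive input, supplied by \cite{S243}.
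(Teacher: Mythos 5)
There is a genuine gap, and you name it yourself: your argument for $O^{O^A_M}_M\simeq O^A_M$ reduces everything to the claim that $1+(\ker\pi\cap O^A_M)\subseteq (O^A_M)^{\times}$ (equivalently, that every element of $O^A_M$ lying outside the ideals $\mathfrak n_i$ is already invertible \emph{inside} $O^A_M$), and you then leave this claim unproved, observing correctly that Lemma \ref{unitlemma} only applies to complete algebras and that the geometric-series inverse of $1+m$ lives a priori only in $\hat O^A_M$. But this closure-under-inverses statement \emph{is} the substance of the idempotence $O^{O^A_M}_M\simeq O^A_M$: once $\hat O^{O^A_M}_M$ is identified with $\hat O^A_M$, the ring $O^{O^A_M}_M$ is by definition generated by $O^A_M$ together with exactly those inverses, so deferring this point to \cite{S243} means the statement being proved has in effect been assumed. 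The same remark applies to your first reduction: the existence of the pre-completion of $O^A_M$ in $M$ (Theorem \ref{deftheorem} does not apply verbatim, as $O^A_M$ need not be finitely generated) and the claim that the deformation functors of $M$ over $A$ and over $O^A_M$ coincide, hence have the same versal hull, are asserted via \cite{S243} rather than argued; without them the comparison $\hat O^{O^A_M}_M\simeq\hat O^A_M$, on which the whole second half rests, is not established. A smaller point: for the augmentation you invoke density to get surjectivity of $\pi|_{O^A_M}$ onto $\oplus_i\enm_k(M_i)$, which requires the $M_i$ to be simple (with $\enm_A(M_i)=k$); this is the intended setting but is an added hypothesis relative to the bare statement.

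For calibration: the paper itself offers no internal proof of this proposition either — the subsection opens by declaring that all its results are proved in \cite{S243} — so there is nothing in the present text to compare your route against. Your outline (restrict the augmentation to the subalgebra; identify $\hat O^{O^A_M}_M$ with $\hat O^A_M$ by a versal-hull comparison; reduce idempotence to a semilocal unit lemma for the non-complete ring $O^A_M$) is a plausible skeleton of such a proof and correctly isolates where the real work lies, but as written it is an honest reduction, not a proof: the decisive unit lemma for $O^A_M$ and the existence/identification of its pre-completion still have to be supplied.
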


Consider a ring homomorphism $\phi:A\rightarrow B$ between two associative rings. Then every $B$-module $M$ is also an $A$-module via $\phi.$ More precisely, if $M$ is defined as $B$-module by the structure morphism $\eta^B_M,$ then $M$ is defined as an $A$-module by the structure morphism $\eta^A_M=\eta^B_M\circ\phi.$ We follow the usual notation, and call the $B$ module $M$ considered as $A$-module via $\phi$, $M_{\phi}^c.$

\begin{lemma} Let $B$ be a $k$-algebra and $M=\oplus_{i=1}^rM_i$ a sum of right $B$-modules.
Let $\phi:A\rightarrow B$ be an algebra homomorphism so that $M_{\phi}^c=\oplus_{i=1}^r (M_i)^c_\phi.$ Then $\phi$ induces a morphism of augmented algebras $\phi^\ast:O^A_{M_{\phi}^c}\rightarrow O^B_M$.
\end{lemma}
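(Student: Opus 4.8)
The plan is to construct the induced map first at the level of the full completions, $\hat O^A_{M^c_\phi}\to\hat O^B_M$, using the versal property of the pre-completion, and then to check that it restricts to a morphism $O^A_{M^c_\phi}\to O^B_M$ between the subrings of locally defined functions.

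First I would unwind the data on the $B$-side. The completion $\hat O^B_M=\enm_{\hat H^B_M}(\hat H^B_M\otimes_{k^r}M)$ carries its structure homomorphism $\rho^B_M\colon B\to\hat O^B_M$, which sits in the canonical diagram over the augmentation $\pi\colon\hat O^B_M\to\oplus_{i=1}^r\enm_k(M_i)$, with $\pi\circ\rho^B_M=\oplus_{i=1}^r\eta_{M_i}$. I would then consider $\rho^B_M\circ\phi\colon A\to\enm_{\hat H^B_M}(\hat H^B_M\otimes_{k^r}M)$ and argue that the pair $(\hat H^B_M,\ \rho^B_M\circ\phi)$ is an admissible test object for $A$ in $M^c_\phi=\oplus_{i=1}^r(M_i)^c_\phi$ in the sense of Definition \ref{precompdef}: the algebra $\hat H^B_M$ is complete and augmented over $k^r$, hence $r$-pointed by Lemma \ref{unitlemma}, the underlying $k^r$-module of $M^c_\phi$ is unchanged, and the relevant triangle of the versal diagram remains commutative after precomposition with $\phi$, since restriction of scalars along $\phi$ gives $\eta_{M_i}\circ\phi=\eta_{(M_i)^c_\phi}$ and hence $\pi\circ(\rho^B_M\circ\phi)=\oplus_{i=1}^r\eta_{(M_i)^c_\phi}$.

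Granting this (and invoking Theorem \ref{deftheorem} for the existence of the pre-completion $\hat H^A_{M^c_\phi}$, which is what makes $O^A_{M^c_\phi}$ meaningful in the first place), the versal property supplies a morphism $\psi\colon\hat H^A_{M^c_\phi}\to\hat H^B_M$ of algebras augmented over $k^r$ that commutes in the diagram; applying the functor $\Pi$ gives a homomorphism $\Pi(\psi)\colon\hat O^A_{M^c_\phi}\to\hat O^B_M$ of augmented algebras with $\Pi(\psi)\circ\rho^A_{M^c_\phi}=\rho^B_M\circ\phi$. To conclude, I would restrict $\Pi(\psi)$ to $O^A_{M^c_\phi}$, which by definition is generated over $k^r$ by $\rho^A_{M^c_\phi}(A)$ together with the inverses $\rho^A_{M^c_\phi}(a)^{-1}$ of those $\rho^A_{M^c_\phi}(a)$ that are units. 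On such generators $\Pi(\psi)$ takes the values $\rho^B_M(\phi(a))\in\rho^B_M(B)$ and $\rho^B_M(\phi(a))^{-1}$ --- a ring homomorphism carries units to units, so $\rho^B_M(\phi(a))$ is a unit of $\hat O^B_M$ and its inverse already lies in $O^B_M$ --- while $\Pi(\psi)$ carries the image of $k^r$ in $\hat O^A_{M^c_\phi}$ to that in $\hat O^B_M$; therefore $\Pi(\psi)(O^A_{M^c_\phi})\subseteq O^B_M$ and $\phi^\ast:=\Pi(\psi)|_{O^A_{M^c_\phi}}\colon O^A_{M^c_\phi}\to O^B_M$ is the map we want. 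It commutes with the augmentations onto $\oplus_{i=1}^r\enm_k(M_i)$ because $\Pi(\psi)$ does, so it is a morphism of augmented algebras; I would also remark that, although $\psi$ itself need not be unique when $r\geq2$, the restriction $\phi^\ast$ is canonical, since its values on the above generators are forced by the commuting diagram to be $\rho^B_M(\phi(a))$ and $\rho^B_M(\phi(a))^{-1}$.

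The one step I expect to need genuine care is identifying $(\hat H^B_M,\ \rho^B_M\circ\phi)$ as a legitimate test object in Definition \ref{precompdef}: one must check that precomposing the $B$-side structure map with $\phi$ does not disturb the commuting triangle of the versal diagram, which comes down to the compatibility $\eta_{M_i}\circ\phi=\eta_{(M_i)^c_\phi}$ of the $A$- and $B$-module structures on each $M_i$. After that everything is formal manipulation with the functor $\Pi$, with the augmentation maps, and with the generators of the rings of local functions; no $\ext$-computation or convergence argument intervenes, these having already been absorbed into Theorem \ref{deftheorem}.
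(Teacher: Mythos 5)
The paper gives no proof of this lemma at all: the entire subsection on the ring of local functions is dispatched with the remark that ``all the results in this subsection is proved in \cite{S243}'', so there is no internal argument to compare yours against. Judged on its own terms, your proposal is the natural route and I find no gap in it: you treat $(\hat H^B_M,\ \rho^B_M\circ\phi)$ as a test object for the versal property of Definition~\ref{precompdef}, which is legitimate because the hypothesis $M^c_\phi=\oplus_{i=1}^r(M_i)^c_\phi$ guarantees the underlying $k^r$-module and hence $\enm_{\hat H^B_M}(\hat H^B_M\otimes_{k^r}M^c_\phi)=\hat O^B_M$ are unchanged, and because $\eta_{(M_i)^c_\phi}=\eta_{M_i}\circ\phi$ keeps the augmentation triangle commutative; versality then yields $\psi$ and $\Pi(\psi)$ with $\Pi(\psi)\circ\rho^A_{M^c_\phi}=\rho^B_M\circ\phi$, and the check that $\Pi(\psi)$ maps the generators $\rho^A_{M^c_\phi}(a)$ and $\rho^A_{M^c_\phi}(a)^{-1}$ into $O^B_M$ (units go to units under the unital map $\Pi(\psi)$, and $\rho^B_M(\phi(a))^{-1}$ is by definition one of the generators of $O^B_M$) is exactly what is needed. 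This also matches in spirit the only proof the paper does sketch in this vein, namely the corollary on $\aspec(\phi)$, which likewise invokes the universal property of the local ring. Two small caveats, both of which you already flag: the lemma only makes sense when the pre-completion of $A$ in $M^c_\phi$ exists (Theorem~\ref{deftheorem} requires $A$ finitely generated and $\dim_k\ext^1_A((M_i)^c_\phi,(M_j)^c_\phi)<\infty$, hypotheses the lemma statement silently assumes), and since Definition~\ref{precompdef} promises uniqueness of $\psi$ only for $S\cong T$, your observation that the restriction $\phi^\ast$ is nevertheless pinned down on the generators is the right way to see that the induced map is canonical rather than an artifact of a choice of $\psi$.
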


When $A$ is a commutative ring, every prime ideal $\mathfrak p\subset A$ is the contraction of the maximal ideal $\mathfrak p A_{\mathfrak p}\subset A_{\mathfrak p}$ where $A_{\mathfrak p}$ is a local ring. We generalize this to associative rings.

\begin{definition} A right $A$ module $P_A$ is called aprime if there is a morphism $f:A\rightarrow B$ and a simple right $B$-module $M_B$ such that $P_A=(M_B)^c_f.$
\end{definition}

\begin{lemma} The contraction of an aprime module is aprime.
\end{lemma}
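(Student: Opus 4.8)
The plan is to reduce the claim to the transitivity of restriction of scalars along a composite of algebra homomorphisms; no genuine difficulty is expected, but I will point out below the one step that requires a moment's care.

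Fix the homomorphism $\phi\colon A\to B$ along which we contract, and let $P_B$ be an aprime right $B$-module, so that the contraction in question is the right $A$-module $(P_B)^{c}_{\phi}$. First I unwind the hypothesis: by the definition of aprime there exist an algebra homomorphism $g\colon B\to C$ and a \emph{simple} right $C$-module $M_C$ with $P_B=(M_C)^{c}_{g}$; concretely, the $B$-module structure on $P_B$ is the composite $\eta^{B}_{P_B}=\eta^{C}_{M_C}\circ g$. Next I set $f=g\circ\phi\colon A\to C$ and compute the $A$-module structure of $(P_B)^{c}_{\phi}$: by the displayed rule $\eta^{A}_{M}=\eta^{B}_{M}\circ\phi$ for restriction of scalars it equals $\eta^{B}_{P_B}\circ\phi=\eta^{C}_{M_C}\circ g\circ\phi=\eta^{C}_{M_C}\circ f$. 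But this is exactly the structure morphism defining $(M_C)^{c}_{f}$. Hence $(P_B)^{c}_{\phi}=(M_C)^{c}_{f}$ as right $A$-modules, and since $M_C$ is a simple right $C$-module and $f$ is an algebra homomorphism out of $A$, this exhibits $(P_B)^{c}_{\phi}$ as aprime, proving the lemma.

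The only point requiring care — and the closest thing here to an obstacle — is that the operation $(-)^{c}$ is \emph{strictly} functorial, i.e. that $\bigl((M_C)^{c}_{g}\bigr)^{c}_{\phi}$ equals $(M_C)^{c}_{g\circ\phi}$ on the nose rather than merely up to isomorphism; this is immediate from applying the formula $\eta^{A}_{M}=\eta^{B}_{M}\circ\phi$ twice together with associativity of composition. Thus the essential content of the statement is simply that the class of simple modules, while not itself stable under contraction, is reached from $A$ through the composite morphism $g\circ\phi$, and a composite of morphisms is again a morphism.
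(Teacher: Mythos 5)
Your argument is correct and is exactly the paper's proof spelled out: the paper disposes of the lemma with ``follows directly by composition of homomorphisms,'' which is precisely your observation that $\bigl((M_C)^{c}_{g}\bigr)^{c}_{\phi}=(M_C)^{c}_{g\circ\phi}$ via $\eta^{A}=\eta^{C}_{M_C}\circ g\circ\phi$. No difference in approach, only in the level of detail.
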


\begin{proof} Follows directly by composition of homomorphisms.
\end{proof}

We now have all we need to give the definition of associative schemes.

\begin{definition} We let $\operatorname{aSpec}(A)$ denote the set of right aprime $A$-modules. For $f\in A$ we  put $D(f)=\{P\in\operatorname{aSpec}(A)|\eta^A_P(f)\text{ is  injective}\}$ and we give $\operatorname{aSpec}(A)$ the topology generated by the sub-basis $\{D(f)\}_{f\in A}.$
\end{definition}

\begin{lemma}\label{continlemma}
Let $\phi:A\rightarrow B$ be a ring homomorphism. Then the induced map $\aspec(\phi):\aspec B\rightarrow\aspec A$ given by $\aspec(\phi)(P)=P^\phi_A$ is continuous.
\end{lemma}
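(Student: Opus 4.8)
The plan is to verify continuity on the generating sub-basis $\{D(f)\}_{f\in A}$ of the topology on $\aspec A$: a map into a space whose topology is generated by a sub-basis is continuous as soon as the preimage of each sub-basic set is open, since taking preimages commutes with the arbitrary unions and finite intersections that build the topology from the sub-basis. So I would fix $f\in A$ and identify $\aspec(\phi)^{-1}(D(f))$ explicitly.

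Unwinding the definitions, a point $P\in\aspec B$ lies in $\aspec(\phi)^{-1}(D(f))$ exactly when $\aspec(\phi)(P)=P^\phi_A$ belongs to $D(f)$, i.e. when $\eta^A_{P^\phi_A}(f)$ is an injective endomorphism of $P$. By the convention recalled just before the definition of aprime modules — a $B$-module $M$ acquires its $A$-module structure via $\phi$ through $\eta^A_M=\eta^B_M\circ\phi$ — we have $\eta^A_{P^\phi_A}(f)=\eta^B_P(\phi(f))$. Hence the defining condition for $P\in\aspec(\phi)^{-1}(D(f))$ is precisely that $\eta^B_P(\phi(f))$ is injective, that is, $P\in D(\phi(f))$.

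This gives the clean identity $\aspec(\phi)^{-1}(D(f))=D(\phi(f))$, and the right-hand side is a sub-basic open subset of $\aspec B$. Since this holds for every $f\in A$, the preimage of every sub-basic open of $\aspec A$ is open, and continuity follows.

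Two points deserve a sentence but are already in place: that $\aspec(\phi)$ is well-defined at all, i.e. $P^\phi_A$ is an aprime $A$-module whenever $P$ is an aprime $B$-module, which is exactly the preceding lemma on contractions of aprime modules; and that $\aspec B$ carries the topology with sub-basis $\{D(g)\}_{g\in B}$, so that $D(\phi(f))$ really is open there. I do not anticipate a genuine obstacle — the whole argument rests on the bookkeeping identity $\eta^A_{P^\phi_A}=\eta^B_P\circ\phi$, and the rest is formal topology.
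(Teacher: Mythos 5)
Your argument is correct and is exactly the paper's proof, just written out in full: the paper's entire proof consists of the identity $\aspec(\phi)^{-1}(D(f))=D(\phi(f))$, which you establish via the bookkeeping relation $\eta^A_{P^\phi_A}=\eta^B_P\circ\phi$ and then conclude by checking openness of preimages on the sub-basis. No differences in approach, and your remarks on well-definedness via the contraction lemma are a reasonable supplement.
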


\begin{proof} $\aspec(\phi)^{-1}(D(f))=D(\phi(f)).$
\end{proof}

\begin{definition}\label{topsheafDef} We define a sheaf of rings $\mathcal O_X$ on $X=\operatorname{aSpec}A$ by:

(i) First, define the presheaf $$O_X(U)=\underset{\underset{M,U}\leftarrow}\lim\ O^A_M$$ where the limit is taken over all finite sums $M=\oplus_{i=1}^rM_i$ of simple right $A$-modules $M_i\in U.$ 

(ii) Then, define the sheaf $$\mathcal O_X(U)=\underset{\underset{V \subsetneqq U}\leftarrow}\lim O_X(V).$$
\end{definition}

For an explanation on how this defines a sheaf of associative rings, see \cite{S241}.

\begin{definition} For an associative unital ring $A$ we define $\operatorname{aSpec A}$ as the ringed space $(\operatorname{aSpec} A, \mathcal O_{\operatorname{aSpec} A}).$
\end{definition}

\begin{corollary} Let $\phi:A\rightarrow B$ be homomorphism of associative rings. Then there is a natural morphism $\operatorname{aSpec}(\phi):\operatorname{aSpec}B\rightarrow\operatorname{aSpec}A$
\end{corollary}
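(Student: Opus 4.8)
The plan is to realize $\aspec(\phi)$ as a morphism of ringed spaces whose underlying continuous map is the one furnished by Lemma~\ref{continlemma} and whose comorphism on structure sheaves is assembled from the maps $\phi^{\ast}$ on rings of locally defined functions. First, by Lemma~\ref{continlemma} the assignment $f := \aspec(\phi)\colon \aspec B \to \aspec A$, $f(P) = P^{\phi}_{A}$, is continuous, with $f^{-1}(D(a)) = D(\phi(a))$ for $a\in A$; hence $f^{-1}(U)$ is open in $\aspec B$ for every open $U\subseteq\aspec A$, and it suffices to construct, compatibly with restriction, ring homomorphisms $\mathcal O_{\aspec A}(U)\to\mathcal O_{\aspec B}(f^{-1}(U))$, i.e. a morphism of sheaves $f^{\sharp}\colon\mathcal O_{\aspec A}\to f_{\ast}\mathcal O_{\aspec B}$.

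\emph{The comorphism on the presheaves $O_X$.} Fix $U$ open in $\aspec A$ and let $N=\bigoplus_{i=1}^{r}N_{i}$ be a finite sum of simple right $B$-modules with each $N_{i}\in f^{-1}(U)$. Its contraction $N^{c}_{\phi}=\bigoplus_{i=1}^{r}(N_{i})^{c}_{\phi}$ is a sum of aprime right $A$-modules (contractions of aprime modules are aprime), and each $(N_{i})^{c}_{\phi}=f(N_{i})$ lies in $U$ by the definition of $f$. The lemma producing $\phi^{\ast}\colon O^{A}_{M^{c}_{\phi}}\to O^{B}_{M}$ then gives a $k$-algebra homomorphism $\phi^{\ast}_{N}\colon O^{A}_{N^{c}_{\phi}}\to O^{B}_{N}$, and one checks the $\phi^{\ast}_{N}$ intertwine the transition maps of the two inverse systems (passage to a subsum of modules on either side), using that $\phi^{\ast}$ is functorial in the module variable. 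Composing the canonical projection $O_{\aspec A}(U)=\varprojlim_{M}O^{A}_{M}\to O^{A}_{N^{c}_{\phi}}$ with $\phi^{\ast}_{N}$ yields a cone over $\{O^{B}_{N}\}$, hence a ring homomorphism $O_{\aspec A}(U)\to O_{\aspec B}(f^{-1}(U))=\varprojlim_{N}O^{B}_{N}$ by the universal property of the limit; naturality in $U$ is automatic from that same universal property.

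\emph{Sheafification and functoriality.} Since $\mathcal O_{\aspec A}$ (resp.\ $\mathcal O_{\aspec B}$) is the sheaf associated to the presheaf $O_{\aspec A}$ (resp.\ $O_{\aspec B}$) in the sense of Lemma~\ref{sheaflemma}, and sheafification is functorial, the morphism of presheaves above induces the desired $f^{\sharp}\colon\mathcal O_{\aspec A}\to f_{\ast}\mathcal O_{\aspec B}$, and $(\,f,f^{\sharp}\,)$ is the morphism $\aspec(\phi)$. Finally $\aspec(\id_{A})=\id$ and $\aspec(\psi\circ\phi)=\aspec(\phi)\circ\aspec(\psi)$ because every ingredient — the map of Lemma~\ref{continlemma}, the maps $\phi^{\ast}$ on rings of local functions, and sheafification — is functorial; this is the naturality claimed.

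\emph{The main obstacle.} I expect the delicate point to be the clause ``the canonical projection $O_{\aspec A}(U)\to O^{A}_{N^{c}_{\phi}}$'': the inverse limit defining $O_{\aspec A}(U)$ in Definition~\ref{topsheafDef} is indexed by finite sums of \emph{simple} right $A$-modules lying in $U$, whereas $N^{c}_{\phi}$ is only a sum of \emph{aprime} right $A$-modules and need not be a legitimate index. Closing this gap requires relating the completion $O^{A}_{N^{c}_{\phi}}$ to the $O^{A}_{M}$ with $M$ a sum of simples in $U$ — for instance via the versal property of Definition~\ref{precompdef} applied to the simple subquotients of $N^{c}_{\phi}$ (whose relation to $U$ must itself be pinned down from the definition of the topology), or, preferably, by first establishing that enlarging the index set of the limit in Definition~\ref{topsheafDef} from simple to aprime modules leaves $\mathcal O_{X}$ unchanged (as in \cite{S241}). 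Once this identification of the two inverse systems is in hand, everything else is formal.
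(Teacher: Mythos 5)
Your proposal follows essentially the same route as the paper's own proof: the map on points is contraction of aprime modules (Lemma~\ref{continlemma}), the comorphism on local function rings comes from the versal property (the paper invokes the universal property of $A_P$ pointwise where you invoke the lemma giving $\phi^{\ast}\colon O^A_{M^c_\phi}\to O^B_M$), and one then passes from a morphism of presheaves to a morphism of sheaves. The simple-versus-aprime indexing subtlety you flag in the limit of Definition~\ref{topsheafDef} is a real point, but it is not a divergence from the paper: the paper's proof compresses exactly that step into ``thus a morphism of pre-sheaves, then of sheaves'' and defers the details to \cite{S241}, so your treatment is, if anything, the more explicit one.
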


\begin{proof} The diagram $$\xymatrix{B\ar[drr]_{\eta^B_P}\ar[r]^\phi&A\ar[dr]^{\eta^A_P}\ar[r]^-\rho&A_P\ar[d]^{\eta^{A_P}_P}\\&&\enm_k(P)}$$ gives the map on points sending the aprime $A$-module $P$ to the aprime $B$-module $P.$ The universal property of $A_P$ gives that there is a unique morphism $B_P\rightarrow A_P$ which gives the corresponding morphism of pointed rings, thus a morphism of pre-sheaves, then of sheaves.
\end{proof}

\begin{definition} Let $\tilde P=\{P_1,...,P_r\}\subseteq X=\operatorname{aSpec}A$ be a finite set. We define the stalk of any presheaf $F_X$ on $X$ in $\tilde P$ as the inductive limit $$F_{X,\tilde P}=\underset{\underset{\tilde P\subseteq U}\rightarrow}\lim\ F_X(U).$$ 
\end{definition}

\begin{lemma} The stalk in $\tilde P$ of the sheaf $\mathcal O_X$ is isomorphic to the stalk in $\tilde P$ of the presheaf $O_X.$
\end{lemma}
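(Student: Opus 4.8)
The plan is to realise both stalks as filtered colimits of the same underlying system and to identify the natural comparison map between them as an isomorphism, exactly as in the classical statement that sheafification does not change stalks. First I would produce the canonical morphism of presheaves $\varepsilon\colon O_X\to\mathcal O_X$. For a fixed open $U$, the restriction maps $\mathrm{res}^U_V\colon O_X(U)\to O_X(V)$, indexed by the proper open subsets $V\subsetneq U$, are compatible with the transition maps of the diagram defining $\mathcal O_X(U)=\varprojlim_{V\subsetneq U}O_X(V)$; hence they factor through a unique ring homomorphism $\varepsilon_U\colon O_X(U)\to\mathcal O_X(U)$, and naturality in $U$ is immediate from the universal property of the limit. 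Taking the filtered colimit over the open neighbourhoods $U$ of $\tilde P$ --- a filtered index set, since a finite intersection of such $U$ is again such a $U$, $\tilde P$ being finite --- we obtain a ring homomorphism $\varepsilon_{\tilde P}\colon O_{X,\tilde P}\to\mathcal O_{X,\tilde P}$, the map whose invertibility is to be shown.

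Next I would describe the inverse on representatives. A germ of $\mathcal O_X$ at $\tilde P$ is given by some $s=(s_V)_{V\subsetneq U}\in\mathcal O_X(U)$ with $\tilde P\subseteq U$; choosing an open $V_0$ with $\tilde P\subseteq V_0\subsetneq U$, I would send this germ to the germ of $s_{V_0}\in O_X(V_0)$ in $O_{X,\tilde P}$. The only thing to verify is that this is a two-sided inverse of $\varepsilon_{\tilde P}$, and this is essentially forced by the shape of the limit: the restriction $\mathcal O_X(U)\to\mathcal O_X(V_0)$ for $V_0\subsetneq U$ is simply the projection onto the subsystem indexed by $\{W\subsetneq V_0\}$, so $s|_{V_0}$ has components $s_W$ for $W\subsetneq V_0$, while $\varepsilon_{V_0}(s_{V_0})$ has components $\mathrm{res}^{V_0}_W(s_{V_0})$; these agree because the identity $s_W=\mathrm{res}^{V_0}_W(s_{V_0})$ is part of $s$ being an element of $\varprojlim_{V\subsetneq U}O_X(V)$. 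Hence $\varepsilon_{\tilde P}$ applied to the claimed inverse of $[s]$ returns $[s|_{V_0}]=[s]$, while the opposite composite sends $[t]$ to $[\varepsilon_U(t)]$ and then to the component $[\mathrm{res}^U_{V_0}(t)]=[t]$; both are routine once the germ-level bookkeeping with filtered colimits and cofiltered limits is unwound, together with checking independence of the choices of $V_0$ and of the representing pair $(U,s)$.

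The whole argument therefore reduces to one topological input, which I expect to be the main obstacle: every open neighbourhood $U$ of $\tilde P$ must contain an open $V_0$ with $\tilde P\subseteq V_0\subsetneq U$ (so that the component map lands on a genuine neighbourhood of $\tilde P$), and moreover such $V_0$ must be cofinal among the neighbourhoods of $\tilde P$. I would establish this from the sub-basis $\{D(f)\}_{f\in A}$: writing $U$ as a union of finite intersections of the $D(f)$ and using that $\tilde P=\{P_1,\dots,P_r\}$ is finite, one first encloses $\tilde P$ in a finite intersection $D=\bigcap_j D(f_j)$ with $\tilde P\subseteq D\subseteq U$, and then intersects $D$ with one further $D(g)$, where $g$ is chosen injective on each $P_i$ but non-injective on some aprime lying in $D$, producing a strictly smaller neighbourhood still containing $\tilde P$; the existence of such a $g$ is a prime avoidance statement for the ideals $\ker\eta^A_{P_i}$. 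The degenerate case in which $U$ is already minimal among neighbourhoods of $\tilde P$ has to be treated separately --- this is the point at which the precise convention for $O_X$ on the empty or one-point open set intervenes --- after which $\varepsilon_{\tilde P}$ is seen to be an isomorphism and the two stalks coincide.
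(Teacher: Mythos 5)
Your core argument is the one the paper itself uses, just written out in full: the paper's proof consists of producing $O_{X,\tilde P}\to\mathcal O_{X,\tilde P}$ from the compatible maps $O_X(U)\to\mathcal O_X(U)\to\mathcal O_{X,\tilde P}$, producing a map in the opposite direction from the projections $\mathcal O_X(U)\to O_X(V)$ for $\tilde P\subseteq V\subsetneq U$, and asserting that the two are mutually inverse by uniqueness in the relevant universal properties. Your first two paragraphs are exactly this with the germ-level bookkeeping made explicit, so on that part the two proofs coincide (the paper even states the functoriality with the variance reversed, but the intended content is the same pair of maps).

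The difference is your third paragraph, and that is where the gap sits. The cofinality issue you isolate is genuine and is passed over in silence by the paper: since $\mathcal O_X(U)$ is the limit over \emph{proper} open subsets $V\subsetneq U$, your inverse (and the paper's backwards map) needs, for a cofinal family of neighbourhoods $U$ of $\tilde P$, some open $V_0$ with $\tilde P\subseteq V_0\subsetneq U$. But your proposed repair does not close it: the element $g$ you ask for (with $\eta^A_{P_i}(g)$ injective for every $i$ but not injective on some aprime in $D$) need not exist --- its nonexistence is precisely the situation in which $U$ is a minimal open neighbourhood of $\tilde P$, which can certainly occur (for instance when $\aspec A$ has only finitely many points, as for finite-dimensional algebras), so no prime-avoidance argument can remove the degenerate case, and that case, which you explicitly leave ``to be treated separately,'' is exactly where the comparison breaks: for a minimal $U$ one is comparing $\mathcal O_{X,\tilde P}\simeq\varprojlim_{V\subsetneq U}O_X(V)$, a limit over opens none of which contain $\tilde P$, with $O_{X,\tilde P}\simeq O_X(U)$, and neither your argument nor the paper's provides an identification of these. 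So you have reproduced the paper's argument faithfully and correctly flagged the one point it suppresses, but to finish one must either show that a finite $\tilde P\subseteq\aspec A$ never has a minimal open neighbourhood, or read the limit in the definition of $\mathcal O_X$ over all $V\subseteq U$ rather than $V\subsetneq U$; as it stands the degenerate case remains unproved in both your proposal and the paper.
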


\begin{proof} Inductive limits is a contravariant functor. Thus the morphism $O_X\rightarrow\mathcal O_X$ gives a morphism $\mathcal O_{X,\tilde P}\rightarrow O_{X,\tilde P}.$ Also, for each open $U\supseteq \tilde P$ we have by the same reason a homomorphism $O_{X}(U)\rightarrow \mathcal O_{X,\tilde P}$ giving a homomorphism $O_{X,\tilde P}\rightarrow\mathcal O_{X,\tilde P}.$ By uniqueness, these morphisms are inverses.
\end{proof}

\begin{corollary} If $\tilde P=\{P_i\}_{i=1}^r\subseteq X$ is a finite set of aprime right $A$-modules,  then $\mathcal O_{X,\tilde P}\simeq O^A_{P},\ P=\oplus_{i=1}^r P_i.$ 
\end{corollary}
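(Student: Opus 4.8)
The plan is to combine the preceding lemma with a cofinality analysis of the colimit that defines the stalk. By that lemma, $\mathcal O_{X,\tilde P}\simeq O_{X,\tilde P}=\varinjlim_{\tilde P\subseteq U}O_X(U)=\varinjlim_{\tilde P\subseteq U}\varprojlim_{M\subseteq U}O^A_M$, the inner limit running over finite sums $M=\oplus M_j$ of simple (aprime) right $A$-modules with all $M_j\in U$. The first observation is that $P=\oplus_{i=1}^rP_i$ occurs among the index objects $M$ for every open $U\supseteq\tilde P$, since each $P_i\in\tilde P\subseteq U$; this gives a canonical projection $\alpha_U\colon O_X(U)\to O^A_P$, and the $\alpha_U$ are compatible with the restriction morphisms $O_X(U)\to O_X(U')$ for $\tilde P\subseteq U'\subseteq U$, both being evaluation at the $P$-slot. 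Passing to the colimit yields a natural homomorphism $\alpha\colon\mathcal O_{X,\tilde P}\to O^A_P$.

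To produce the inverse I would use the separation property of the topology on $X$: if $Q\in\aspec A$ is any aprime module with $Q\notin\tilde P$, there is $f\in A$ with $\tilde P\subseteq D(f)$ but $Q\notin D(f)$, so the neighbourhood filter of $\tilde P$ can be shrunk so as to exclude any prescribed $Q$. Hence, cofinally in $U$, every module $M$ contributing to $O_X(U)$ whose summands lie outside $\tilde P$ is discarded further down the inverse system, and the off-diagonal ($\ext^1$) components of the $O^A_M$ linking $\tilde P$-slots to other slots disappear in the colimit. Together with the structure map $\rho^A_P\colon A\to O^A_P$, the idempotency $O^{O^A_P}_P\simeq O^A_P$, and the functoriality $\phi^\ast\colon O^A_{M^c_\phi}\to O^B_M$ of the local-function construction, this lets me lift $O^A_P$ back into the colimit and obtain $\beta\colon O^A_P\to\mathcal O_{X,\tilde P}$. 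One then checks $\alpha\circ\beta=\id$ directly from the construction, and $\beta\circ\alpha=\id$ by verifying that a section over some $U\supseteq\tilde P$ with vanishing $P$-component already restricts to zero on a smaller neighbourhood of $\tilde P$. The commutative case $\hat O^A_M\simeq\oplus_{i=1}^r\hat A_{\mathfrak m_i}$ from the earlier example is a convenient sanity check: there the off-diagonal parts already vanish and the colimit recovers $O^A_P$, consistently with $\mathcal O_{\spec A,\mathfrak p}\simeq A_{\mathfrak p}$ for Noetherian domains.

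The step I expect to be the main obstacle is exactly this collapse of the double limit — showing that projection onto the $P$-slot becomes an isomorphism after taking the colimit over neighbourhoods of $\tilde P$. It cannot be deduced from a formal interchange of $\varprojlim$ and $\varinjlim$; it genuinely requires both the separation of points by the basic opens $D(f)$ and the fact that restricting the local-function ring along a sub-sum of simple modules is well behaved and surjective enough, so that the contributions of all aprime modules other than $P_1,\dots,P_r$ are washed out in the limit.
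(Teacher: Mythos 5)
Your first half is exactly the route the paper intends: the corollary is stated as an immediate consequence of the preceding lemma, so one reduces to the presheaf stalk $O_{X,\tilde P}=\varinjlim_{\tilde P\subseteq U}\varprojlim_{M}O^A_M$, and since $P=\oplus_{i=1}^rP_i$ occurs among the indices $M$ for every $U\supseteq\tilde P$, the projections $O_X(U)\to O^A_P$ assemble into a canonical map $\alpha:\mathcal O_{X,\tilde P}\to O^A_P$. The paper itself offers nothing beyond this (no argument for why $\alpha$ is an isomorphism is recorded), and you correctly identify that the collapse of the double limit is the real content. The problem is that your mechanism for it does not work as described. First, the separation claim --- that for every aprime $Q\notin\tilde P$ there is $f\in A$ with $\tilde P\subseteq D(f)$ and $Q\notin D(f)$ --- is proved nowhere in the paper and fails in general: if few elements of $A$ act non-injectively on aprime modules (e.g.\ $A$ a division algebra, or any situation where $D(f)$ is all of $\aspec A$ for every $f\neq 0$), the $D(f)$-topology cannot separate $\tilde P$ from other points. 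Second, even where single points can be excluded, each basic open is an intersection of finitely many $D(f)$'s and therefore still contains, in general, infinitely many aprime modules other than $P_1,\dots,P_r$; so it is never the case, cofinally in $U$, that the inner inverse limit is indexed only by sub-sums of $P$. The "washing out" of the other slots and of the off-diagonal components is precisely the statement to be proved, and it does not follow from excluding one prescribed $Q$ at a time.

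The construction of the inverse $\beta$ is also not a construction: to map $O^A_P$ into $\varprojlim_{M\subseteq U}O^A_M$ you would need compatible homomorphisms $O^A_P\to O^A_M$ for every finite sum $M$ of points of $U$, whereas the natural maps (restriction to a sub-sum) go the other way, $O^A_M\to O^A_P$ for $P\subseteq M$; the idempotency $O^{O^A_P}_P\simeq O^A_P$ and the functoriality $\phi^\ast:O^A_{M^c_\phi}\to O^B_M$ do not supply the missing arrows. What an honest proof would need, and what the paper implicitly leans on via its companion references, is a statement that for $P\subseteq M$ the restriction $O^A_M\to O^A_P$ becomes an isomorphism after passing to the limit over neighbourhoods of $\tilde P$ --- ultimately a consequence of the unit lemma ($r$-pointedness of complete augmented algebras, so that elements invertible on the $\tilde P$-slots become units) rather than of any point-set separation of $\aspec A$. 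As it stands, your proposal establishes the natural map $\alpha$ but not its bijectivity, so the key step is still open.
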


\begin{example} When $A$ is a finitely generated domain over an algebraically closed field $k$ we have $\operatorname{aSpec}A\simeq\spec A.$
\end{example}

\begin{corollary} Let $A$ be a finite dimensional $\Bbbk$-algebra, $\Bbbk$ algebraically closed. Then the homomorphism  $$\eta:A\rightarrow O^A(\simp A)$$ where $\simp A$ is the set of all simple (right) $A$-modules, is an isomorphism.
\end{corollary}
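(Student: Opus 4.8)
The plan is to realize the completion $O^A(\simp A)$ as a finite product of matrix algebras and then identify that product with $A$ via the Artin–Wedderburn structure theory. Since $A$ is finite-dimensional over an algebraically closed field $\Bbbk$, it has finitely many simple right modules $M_1,\dots,M_r$ (up to isomorphism), each with $\enm_A(M_i)=\Bbbk$, so $\simp A=\{M_1,\dots,M_r\}$ is a legitimate finite set and the completion $\hat O^A_M$ with $M=\oplus_{i=1}^r M_i$ is defined by Theorem~\ref{deftheorem}, the $\ext^1$-dimensions being automatically finite.

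\emph{First} I would compute the pre-completion $\hat H^A_M$. The key point is that because $A$ is finite-dimensional, its Jacobson radical $J=\rad(A)$ is nilpotent, say $J^N=0$. I claim that the $\mathfrak m$-adic topology used to build $\hat H^A_M$ is already discrete, so that $\hat H^A_M\to$ its own finite quotient is an isomorphism and the pre-completion is an honest finite-dimensional algebra. Concretely, one shows $\hat H^A_M$ is the basic algebra Morita-equivalent to $A$ (or rather, that $\enm_{\hat H^A_M}(\hat H^A_M\otimes_{k^r}M)$ recovers $A$ on the nose), using that $\hat H^A_M\otimes_{k^r}M$ is a progenerator. \emph{Then} the completion $\hat O^A_M=\enm_{\hat H^A_M}(\hat H^A_M\otimes_{k^r}M)$ is a finite-dimensional semiperfect $\Bbbk$-algebra whose simple modules are exactly $M_1,\dots,M_r$ (by the Proposition on simple modules of $\hat O^A_M$). \emph{Next}, since this algebra is already complete and finite-dimensional, passing to the ring of locally defined functions $O^A(\simp A)$ changes nothing new beyond inverting units that are already present — so $O^A(\simp A)=\hat O^A_M$ as a finite-dimensional algebra, and the structure map $\eta=\rho^A_M:A\to O^A(\simp A)$ is the canonical one.

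\emph{Finally}, to see $\eta$ is an isomorphism I would argue on both sides. Surjectivity/fullness: the versal property of Definition~\ref{precompdef}, applied with $S=A$ itself acting on $A\otimes_{k^r}M\cong M$ via its own module structure, produces a section or retraction forcing $\rho^A_M$ to be essentially surjective; alternatively, both $A$ and $O^A(\simp A)$ have the same finite dimension over $\Bbbk$ — compute $\dim_\Bbbk O^A(\simp A)=\sum_{i,j}\dim_\Bbbk M_i\cdot(\text{multiplicities}) $ via the $\hat H$-description and match with $\dim_\Bbbk A$. Injectivity: $\ker\eta$ is a two-sided ideal annihilating every simple module (since $\eta$ is compatible with the augmentations to $\oplus\enm_\Bbbk(M_i)$ through all the truncations), hence $\ker\eta\subseteq\rad(A)$, and iterating with the filtration by powers of $\rad(A)$ — which is separated because $\rad(A)$ is nilpotent — gives $\ker\eta=0$. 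Combining, $\eta$ is a bijective algebra homomorphism.

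\emph{The main obstacle} I anticipate is the bookkeeping in identifying $\hat H^A_M$ (equivalently $\hat O^A_M$) precisely enough to pin down its $\Bbbk$-dimension — i.e. showing the $\mathfrak m$-adic completion does not overshoot. This reduces to the nilpotence of $\rad(A)$ together with the fact, presumably from \cite{S243}, that the pre-completion of a \emph{complete} (here: finite-dimensional, radical-nilpotent) algebra in its full set of simples is the algebra itself up to Morita-type adjustment; once that is in hand, the dimension count and the $\ker\eta\subseteq\rad(A)$ argument are routine. If the clean dimension count proves awkward, the fallback is the purely categorical argument: $\eta$ induces an equivalence on module categories compatible with simples, and a homomorphism of finite-dimensional algebras inducing such an equivalence and respecting the pointed structure is forced to be an isomorphism.
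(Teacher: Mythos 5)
Your route is genuinely different from the paper's, and the two places where you defer the work are exactly where the content of the statement lives. The paper's proof consists of two reductions: for finite dimensional $A$ the ring of locally defined functions coincides with the completion, $O^A(\tilde M)\simeq\hat O^A(\tilde M)$, and over an algebraically closed field the sheaf on $\aspec A$ contributes nothing beyond $\simp A$; the actual isomorphism $A\simeq\hat O^A_M=\enm_{\hat H^A_M}(\hat H^A_M\otimes_{k^r}M)$ for $M$ the sum of all simples is the generalized Burnside theorem, imported from the references (Eriksen--Laudal--Siqveland) rather than reproved. You are instead attempting to prove that theorem from scratch, and both halves of your argument have gaps.

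For surjectivity: the versal property of Definition \ref{precompdef} quantifies over augmented algebras $S$ equipped with maps $A\to\enm_S(S\otimes_{k^r}M)$; to ``apply it with $S=A$'' you would need to exhibit $A$ itself as $\enm_S(S\otimes_{k^r}M)$ for some such $S$, which is precisely what is to be proved, so the argument is circular. The dimension count fares no better: $\dim_\Bbbk\enm_{\hat H}(\hat H\otimes_{k^r}M)=\sum_{i,j}\dim_\Bbbk H_{ij}\cdot\dim_\Bbbk M_i\cdot\dim_\Bbbk M_j$, so matching it against $\dim_\Bbbk A$ requires computing the blocks $H_{ij}$ of the pro-representing hull, i.e.\ running the entire obstruction calculus --- that is the hard part, not bookkeeping. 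For injectivity: from $\ker\eta\subseteq J$ (with $J$ the Jacobson radical) you cannot simply ``iterate with the filtration''; you would need $\eta$ to stay injective on each graded piece $J^n/J^{n+1}$, which is not formal. The standard repair is different: an element of $\ker\eta$ acts as zero on the versal family $\hat H^A_M\otimes_{k^r}M$ and hence on every finite iterated extension of the $M_i$; since $A_A$ has a composition series with factors among the $M_i$, such an element annihilates $A_A$, in particular $1$, so it is zero. I would suggest either importing the generalized Burnside theorem as the paper implicitly does, or replacing your two key steps by this annihilation argument for injectivity together with the filtration-comparison (associated graded) argument for surjectivity from \cite{ELS17}.
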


\begin{proof} When $A$ is finite dimensional $\hat O^A(\tilde M)\simeq O^A(\tilde M)$ for any set of simple modules $\tilde M.$ Also, because $\Bbbk$ is algebraically closed, $\mathcal O^A(\operatorname{aSpec}A)\simeq\mathcal O^A(\simp A).$ 
\end{proof}

\begin{definition}\label{Schemedef} An associative scheme is a topological space with a  sheaf of associative rings $(X,\mathcal O_X)$ which has a cover of open affine subsets $X=\cup_{i\in I}\operatorname{aSpec}(A_i)$ where each $A_i$ is an associative ring. A morphism of associative schemes is  a morphism of ringed spaces. The category of associative schemes over $k$ is denoted $\cat{aSch}_k.$
\end{definition}

\begin{definition} An associative variety over an algebraically closed field $\Bbbk$ is an irreducible associative scheme over $\Bbbk.$
\end{definition}

\end{document}